\documentclass{amsart}
\usepackage{latexsym,amsfonts,amsmath,amstext,amssymb,amscd,graphicx}

\newtheorem{thm}{Theorem} 
\newtheorem{lmm}{Lemma}
\newtheorem{prp}{Proposition}
\newtheorem{crl}{Corollary}
\newtheorem{dfn}{Definition}

\newtheorem{rmr}{Remark}

\theoremstyle{definition}

\newcommand{\degm}{\deg_{\Gamma_M}}
\newcommand{\inv}{^{-1}}
\newcommand{\di}{\displaystyle}
\newcommand{\Ghat}{\widehat{G}}
\newcommand{\Hhat}{\widehat{H}}
\newcommand{\Zhat}{\widehat{\ZN^2}}
\newcommand{\oa}{\overline{a}}

\newcommand{\prim}{^\prime}

\newcommand{\Lgrad}{L = \bigoplus_{g\in G}L_g}
\newcommand{\Agrad}{A = \bigoplus_{g\in G}A_g}
\newcommand{\grad}{=\bigoplus_{g\in G}}
\newcommand{\ZN}{\mathbb{Z}}

\newcommand{\Wn}{W(m;\underline{n})}
\newcommand{\Mt}{M(2;\underline{n})}
\newcommand{\Wt}{W(2;\underline{n})}

\newcommand{\oA}{\overline{A}}

\newcommand{\Omn}{O(m;\underline{n})}
\newcommand{\Ot}{O(2;\underline{n})}
\newcommand{\oGamma}{{\overline{\Gamma}}}

\newcommand{\unn}{\underline{n}}
\newcommand{\un}{\underline{1}}
\newcommand{\ut}{\underline{t}}
\newcommand{\us}{\underline{s}}
\newcommand{\ta}{\ut^{3a}}

\newcommand{\tai}{\ut^{3a-3\veps_i}}

\renewcommand{\aa}{\alpha(\ut)}
\newcommand{\ai}{\alpha(\underline{t})\inv}

\newcommand{\ddo}{\partial_1}
\newcommand{\ddt}{\partial_2}
\newcommand{\ddi}{\partial_i}

\newcommand{\tdo}{\til\partial_1}
\newcommand{\tdt}{\til\partial_2}
\newcommand{\tdi}{\til\partial_i}

\newcommand{\G}{\widehat{G}}
\newcommand{\tW}{\widetilde{W}(2;\unn)}

\newcommand{\tE}{\widetilde{E}}
\newcommand{\til}{\widetilde}

\newcommand{\xa}{x^{(a)}}
\newcommand{\xb}{x^{(b)}}

\newcommand{\taun}{\tau(\underline{n})}
\newcommand{\veps}{\varepsilon}

\renewcommand{\div}{\operatorname{div}}

\DeclareMathOperator{\Id}{\operatorname{Id}}
\DeclareMathOperator{\Der}{\operatorname{Der}}

\DeclareMathOperator{\Span}{\operatorname{Span}}

\DeclareMathOperator{\Supp}{\operatorname{Supp}}
\DeclareMathOperator{\Aut}{\operatorname{Aut}}

\begin{document}
\title{Gradings by Groups on Melikyan Algebras}
\author{Jason McGraw}
 
\begin{abstract}In this paper we describe all gradings by abelian groups without elements of order five on the Melikyan algebras over algebraically closed fields of characteristic five. 
\end{abstract}
\maketitle
\section{Introduction}\label{sI}

Let $A$ be an algebra over a field $F$, $G$ a group and $\Aut A$, $\Aut G$ the automorphism groups of $A$ and $G$, respectively. The base field $F$ will always be algebraically closed. The field will be of characteristic five when dealing with Melikyan algebras.

\begin{dfn}\label{d1} A \emph{grading $\Gamma$ by a group $G$} on an algebra $A$, also called a {\em $G$-grading}, is a decomposition  $\Gamma:A = \bigoplus_{g\in G} A_g$ where each $A_g$ is a subspace such that $[A_{g'},A_{g''}]\subset A_{g'g''}$
for all $g',\, g''\in G$. For each $g\in G$, we call the subspace $A_g$ the {\em homogeneous space} of degree $g$. The set $\Supp_\Gamma A=\{ g\in G\, |\, A_g \neq 0\}$ is called the {\em support} of
the grading.
\end{dfn}

For a grading by a group $G$ on a \emph{simple} Lie algebra $L$, it is well known that the subgroup generated by the support is abelian \cite[Lemma 2.1]{typea}. If $L$ is finite-dimensional and the support generates $G$ we have that $G$ is finitely generated.

\begin{dfn}\label{d2}
Two gradings $\di\Agrad$ and
$A=\di\bigoplus_{h\in G}A_h\prim$ of an algebra $A$ are called {\em equivalent} if there exist $\Psi\in\Aut A$ and $\theta\in\Aut G$ such that $\Psi(A_g)=A_{\theta(g)}\prim$
for all $g\in G$.  If $\theta$ is the identity, we call the gradings {\em isomorphic}.\end{dfn}

\begin{dfn}\label{d3} Let $A = \bigoplus_{g\in G}A_g$ be a grading by a group $G$ on an algebra $A$
and $\varphi$ a group homomorphism of $G$ onto $H$. The {\em coarsening of the $G$-grading induced by $\varphi$} is the $H$-grading defined by $A = \bigoplus_{h\in H}\overline{A}_{h}$
where $$\overline{A}_{h}=\di\bigoplus_{g\in G,\ \varphi(g)=h}A_{g}.$$\end{dfn}

The task of finding all gradings on simple Lie algebras by finite groups in the case of algebraically closed fields of characteristic zero is almost complete ---  see \cite{Eld} and also \cite{typebcd,gtwo,typea,typeg,typef,typed,lie2}. In the case of positive characteristic $p$, a description of gradings on the classical simple Lie algebras, with certain exceptions, has been obtained in \cite{BK}, \cite{posb}. In the case of simple graded Cartan type Lie algebras, the gradings by $\ZN$ have been described in \cite{slafpc}. It was shown in \cite{jmmm} that all gradings by groups without elements of order $p$ on the graded simple Cartan type Lie algebras, up to isomorphism, fall into the category of what we call \emph{standard} gradings (which are coarsenings of the standard $\ZN^k$-gradings). In \cite{jmmm} the gradings by arbitrary groups on the Witt algebra $W(1;1)$ were described. The gradings on the restricted Witt and special algebras have been announced recently by Bahturin and Kochetov in \cite{BMK}. This paper will deal with the gradings on the Melikyan algebras by arbitrary abelian groups with no elements of order five in the case where the base field $F$ is assumed to be algebraically closed and $p=5$. We use the notation of \cite{slafpc}.

Our main result is the following.

\begin{thm}\label{t1}
Let $L$ be a Melikyan algebra over an algebraically closed field. Suppose $L$ is graded by a group $G$, the support generates $G$ and $G$ has no elements of order 5. Then the grading is isomorphic to a standard $G$-grading.
\end{thm}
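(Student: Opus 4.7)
The plan is to recast the grading as a quasi-torus action on $L$ and reduce the theorem to a conjugacy problem inside $\Aut L$. Since $L$ is finite-dimensional simple and the support of $\Gamma$ generates $G$, the group $G$ is finitely generated abelian. Because $G$ has no elements of order $5=\cha F$ and $F$ is algebraically closed, the affine group scheme $\Ghat$ dual to $G$ is diagonalizable (a product of an algebraic torus and a finite abelian group of order prime to~$5$), and the $G$-grading on $L$ is equivalent to a homomorphism $\eta\colon\Ghat\to\Aut L$ whose weight space decomposition recovers $\Gamma$. The standard $G$-gradings correspond precisely to those $\eta$ whose image lies in the ``standard'' maximal quasi-torus $T\subset\Aut L$, namely the rank-two torus scaling the divided-power generators $\tto,\tt$ and yielding coarsenings of the standard $\ZN^2$-grading. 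The theorem thus reduces to showing that $Q:=\eta(\Ghat)$ is $\Aut L$-conjugate into $T$.

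The next step is to invoke the theorem of Kuznetsov (analogous to the corresponding results for other Cartan type algebras appearing in \cite{slafpc}) that the natural filtration $L=L_{-3}\supset L_{-2}\supset\cdots$ of the Melikyan algebra $\Mt$ is invariant under every automorphism of $L$. Consequently $Q$ preserves the filtration and induces a $G$-grading on the associated graded algebra $\operatorname{gr}L$, which is canonically isomorphic to $\Mt$ equipped with its standard $\ZN$-grading. I would first handle the graded problem: since $\Ghat$ is linearly reductive (no $5$-torsion), the action of $Q$ on $\operatorname{gr}L$ and the one-parameter subgroup inducing the standard $\ZN$-grading may be simultaneously diagonalized inside $\Aut(\operatorname{gr}L)$. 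Doing so yields a $G\times\ZN$-grading refining both; inspecting the resulting weight spaces on the three $\ZN/3\ZN$-homogeneous components $\Wt$, $\Ot$ and $\tW$ then forces $Q$ into $T$ inside the graded automorphism group.

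The final step is to lift this conjugacy from $\operatorname{gr}L$ back to $L$. An automorphism of $L$ whose induced automorphism on $\operatorname{gr}L$ conjugates $Q$ into $T$ is constructed from the graded conjugator modulo the ``filtration-preserving, graded-identity'' subgroup of $\Aut L$; the remaining obstruction is killed by averaging over $\Ghat$, whose linear reductivity again is decisive. This cohomological device is the same one used for the Witt and special algebras in \cite{jmmm} and for the restricted Witt and special algebras in \cite{BMK}. The main obstacle is executing this last step cleanly for the Melikyan algebra: the exotic third component $\tW$ mixes nontrivially with the other two under generic automorphisms, so one has to first verify that the $\ZN/3\ZN$-grading is respected by every diagonalizable subgroup of $\Aut L$ (equivalently, that the semisimple part of any element of $Q$ preserves the three components), and then control the graded, filtration-preserving automorphisms finely enough to apply the averaging argument. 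Once $Q\subseteq T$ up to conjugation, the image of $\eta$ identifies $\Gamma$ with the coarsening of the standard $\ZN^2$-grading induced by the character $\ZN^2\to G$ dual to $\eta|_T$, i.e.\ with a standard $G$-grading, as required.
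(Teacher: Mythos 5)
Your setup (grading $\leftrightarrow$ quasi-torus $Q=\eta(\Ghat)\subset\Aut L$, and reduction of the theorem to conjugating $Q$ into the standard torus $T$) coincides with the paper's, and your closing observation that $Q\subset T$ yields a coarsening of the standard $\ZN^2$-grading is the paper's Lemma~\ref{lz}/Lemma~\ref{l1} argument. But the middle of your plan has a genuine gap: the sentence claiming that, after simultaneously diagonalizing $Q$ with the grading one-parameter subgroup, ``inspecting the resulting weight spaces on the three $\ZN/3\ZN$-homogeneous components \dots forces $Q$ into $T$'' is an assertion of precisely the hard content of the theorem, with no argument supplied. Simultaneous diagonalization inside $\GL(L)$ of two commuting diagonalizable groups does not place $Q$ inside $T$ as subgroups of $\Aut L$: the conjugation must be implemented by an automorphism of the Melikyan algebra, and that requires concrete structural input on $\Aut\Mt$. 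The paper obtains it by showing $T_M$ is a maximal torus, computing its normalizer $N_{\Aut\Mt}(T_M)=T_MU_M$ via the restriction map to $\Aut\Wt$ (using Kuznetsov's results that automorphisms of $\Mt$ preserve the filtration, preserve/restrict to $\Wt$, and that automorphisms of $\Wt$ extend to $\Mt$), invoking Platonov's theorem that a quasi-torus lies in the normalizer of a maximal torus, and then quoting the conjugacy result for quasi-tori in $N_{\Aut\Wt}(T_W)$ from \cite{jmmm}. In particular, when $n_1=n_2$ there is the extra ``swap'' automorphism $\sigma_M$ with $N(T_M)=T_M\langle\sigma_M\rangle$, and your sketch gives no mechanism for showing that a quasi-torus meeting the nontrivial coset can be moved into $T$; weight-space inspection alone does not do this, and one should expect genuine work here, since for groups with $p$-torsion nonstandard gradings do exist, so the structure of $\Aut\Mt$ must enter essentially.

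A secondary point: your detour through $\operatorname{gr}L$ and the subsequent cohomological/averaging lift is not needed for the Melikyan algebra, because every automorphism of $\Mt$ already preserves the canonical filtration and, by the decomposition $\Psi=\Phi\Omega$ from Kuznetsov used in Proposition~\ref{p3}, one can work directly in $\Aut\Mt$; the lifting step you outline is plausible but unexecuted, and in any case it only relocates, rather than solves, the conjugacy-into-$T$ problem flagged above.
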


The correspondence between the gradings on an algebra by finite abelian groups of order coprime to the characteristic $p$ of the field and finite abelian subgroups of the automorphisms of this algebra is well known. Using the theory of algebraic groups, this extends to infinite abelian groups. Namely, a grading on an algebra $\Lgrad$ by a finitely generated abelian group without elements of order $p$ gives rise to an embedding of the dual group $\widehat{G}$ into $\Aut L$ using the following action:
$$\chi*y=\chi(g)y,\quad\mbox{for all }y\in L_g,\quad g\in G,\quad\chi\in\G.$$
We will denote this embedding by $\eta:\widehat{G}\to\Aut L$, so
\begin{equation}\label{eq1}\eta(\chi)(y)=\chi*y.\end{equation} 
\begin{lmm}\label{lz}
Let $G$, $H$ be groups, $A$ an algebra and $\phi:G\to H$ be a group homomorphism, $\Gamma:A=\bigoplus_{g\in G}A_g$ be a $G$-grading and $\overline{\Gamma}:A=\bigoplus_{h\in H}\overline{A}_h$ be the $H$-grading defined by $\overline{A}_h=\bigoplus_{g\in G,\ h=\phi(g)}A_g$. Then $\eta_{\overline{\Gamma}}(\widehat{H})\subset\eta_\Gamma(\Ghat)$ where the homomorphisms $\eta_\Gamma:\Ghat\to\Aut A$ and $\eta_{\overline{\Gamma}}:\widehat{H}\to\Aut A$ are defined by (\ref{eq1}) with respect to the gradings $\Gamma$ and $\overline{\Gamma}$ respectively.
\end{lmm}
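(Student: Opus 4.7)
The plan is to produce, for each character $\psi\in\widehat{H}$, an explicit preimage $\chi\in\widehat{G}$ such that $\eta_\Gamma(\chi)=\eta_{\overline{\Gamma}}(\psi)$. The natural candidate is the pullback along $\phi$, namely $\chi:=\psi\circ\phi$, which is a group homomorphism $G\to F^\times$ because it is the composition of two such homomorphisms, hence an element of $\widehat{G}$. This gives the dual map $\widehat{\phi}:\widehat{H}\to\widehat{G}$, and the content of the lemma is essentially the identity $\eta_{\overline{\Gamma}}=\eta_\Gamma\circ\widehat{\phi}$.

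To verify this identity I would check it on homogeneous elements of the finer grading $\Gamma$ and then extend by linearity. Pick $g\in G$ and $y\in A_g$. By Definition~\ref{d3}, $A_g$ is one of the summands contributing to $\overline{A}_{\phi(g)}$, so $y$ is homogeneous of degree $\phi(g)$ in the coarsening $\overline{\Gamma}$. Applying the defining formula (\ref{eq1}) for both gradings,
\[
\eta_{\overline{\Gamma}}(\psi)(y)=\psi(\phi(g))\,y=(\psi\circ\phi)(g)\,y=\chi(g)\,y=\eta_\Gamma(\chi)(y).
\]
Since $A=\bigoplus_{g\in G}A_g$ and both $\eta_{\overline{\Gamma}}(\psi)$ and $\eta_\Gamma(\chi)$ are linear, they agree on all of $A$. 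Hence $\eta_{\overline{\Gamma}}(\psi)=\eta_\Gamma(\chi)\in\eta_\Gamma(\widehat{G})$, and the inclusion $\eta_{\overline{\Gamma}}(\widehat{H})\subset\eta_\Gamma(\widehat{G})$ follows.

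There is no real obstacle here; the lemma is a straightforward functoriality statement saying that coarsening a grading corresponds to restricting the torus action along the dual homomorphism. The only point that requires a moment's attention is the compatibility between the degree of $y$ in $\Gamma$ and its degree in $\overline{\Gamma}$, which is precisely what the definition of the coarsening records.
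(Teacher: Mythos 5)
Your proof is correct and follows essentially the same route as the paper: both arguments take the pullback character $\psi\circ\phi\in\Ghat$ and verify on $\Gamma$-homogeneous elements that it induces the same automorphism as $\eta_{\overline{\Gamma}}(\psi)$, using $A_g\subset\overline{A}_{\phi(g)}$. The only cosmetic difference is that the paper checks explicitly that the pullback is a character, which you dispatch by noting it is a composition of homomorphisms.
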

\begin{proof}
Let $\chi\in\Hhat$. For $y\in A_g$ we have $\eta_{\overline{\Gamma}}(\chi)(y)=\chi(\phi(g))y$ since $A_g\subset\overline{A}_{\phi(g)}$. Let $\zeta:G\to F^\times$ be the map defined by $\zeta(g)=\chi(\phi(g))$ for all $g\in G$. Then $$\zeta(g_1g_2)=\chi(\phi(g_1g_2))=\chi(\phi(g_1)\phi(g_2))=\chi(\phi(g_1))\chi(\phi(g_2))=\zeta(g_1)\zeta(g_2)$$
for all $g_1,g_2\in G$. Hence $\zeta\in\Ghat$. Furthermore, for all $y\in A_g$ we have $$\eta_{\overline{\Gamma}}(\chi)(y)=\chi(\phi(g))y=\zeta(g)y=\eta_\Gamma(\zeta)(y).$$
Hence $\eta_{\overline{\Gamma}}(\chi)\in\eta_\Gamma(\Ghat)$.
\end{proof}

\begin{lmm}\label{l1}
 Let $G$, $H$ be abelian groups without elements of order $p$, $A$ an algebra  $\Gamma:A=\bigoplus_{g\in G}A_g$ be a $G$-grading and $\overline{\Gamma}:A=\bigoplus_{h\in H}\overline{A}_h$ be an $H$-grading such that the groups are generated by their support respectively. If $\eta_\oGamma(H)\subset\eta_\Gamma(G)$ then $\oGamma$ is a coarsening of the $G$-grading where $\eta_\oGamma(H)$ and $\eta_\Gamma(G)$ are defined by (\ref{eq1}).\end{lmm}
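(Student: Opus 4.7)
The plan is to extract a group homomorphism $\varphi\colon G\to H$ from the inclusion of dual-group images in $\Aut A$, and then verify that $\oGamma$ coincides with the coarsening of $\Gamma$ induced by $\varphi$. Because the supports generate $G$ and $H$ and these groups have no elements of order $p$, the character groups $\Ghat$ and $\Hhat$ separate points (any nontrivial element of $G$ or $H$ generates a cyclic subgroup admitting a nontrivial character into $F^\times$, which extends by divisibility of $F^\times$). Consequently $\eta_\Gamma$ and $\eta_\oGamma$ are injective, and the hypothesis translates into an injective homomorphism $\psi\colon\Hhat\to\Ghat$ with $\eta_\Gamma\circ\psi=\eta_\oGamma$.

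The first substantive step is to define $\varphi$ on $\Supp\Gamma$. For $g\in\Supp\Gamma$ and a nonzero $x\in A_g$, I would decompose $x=\sum_h x_h$ in the $\oGamma$-grading and, for each $\chi\in\Hhat$, equate $\chi*x=\sum_h\chi(h)x_h$ with $\chi*x=\psi(\chi)*x=\psi(\chi)(g)\,x$. Since the $x_h$ lie in distinct $\oGamma$-components, this forces $\chi(h)=\psi(\chi)(g)$ whenever $x_h\neq 0$, and character separation on $H$ collapses all such $h$ to a single element $\varphi(g)\in\Supp\oGamma$, independent of the chosen $x$. This yields $A_g\subset\overline{A}_{\varphi(g)}$ and, more importantly for the next step, the key identity $\chi(\varphi(g))=\psi(\chi)(g)$ for all $\chi\in\Hhat$.

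The second step is to extend $\varphi$ to $G$ and verify the coarsening. Given any relation $g_1\cdots g_k=g_1'\cdots g_l'$ in $G$ with factors in $\Supp\Gamma$, multiplicativity of each $\psi(\chi)$ together with the key identity gives $\chi(\varphi(g_1)\cdots\varphi(g_k))=\chi(\varphi(g_1')\cdots\varphi(g_l'))$ for every $\chi\in\Hhat$, and character separation on $H$ then forces equality in $H$. Since $\Supp\Gamma$ generates $G$, this yields a well-defined homomorphism $\varphi\colon G\to H$. The equality $\overline{A}_h=\bigoplus_{\varphi(g)=h}A_g$ is then routine: the inclusion $\supseteq$ is immediate from $A_g\subset\overline{A}_{\varphi(g)}$, while any $y\in\overline{A}_h$, decomposed in the $\Gamma$-grading as $y=\sum_g y_g$, must have $y_g=0$ whenever $\varphi(g)\neq h$, by directness of the $\oGamma$-sum.

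The crux of the argument is the character-separation step, used both to pin down $\varphi$ on the support and to verify its compatibility with the defining relations of $G$. This is the sole place where the hypothesis that $G$ and $H$ have no elements of order $p$ is invoked, and it is precisely what would fail for $p$-torsion groups.
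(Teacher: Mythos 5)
Your proof is correct, and its skeleton is the same as the paper's: the inclusion $\eta_{\oGamma}(\Hhat)\subset\eta_{\Gamma}(\Ghat)$ forces each component $A_g$, $g\in\Supp_\Gamma A$, to lie in a single component $\oA_h$; one sets $\varphi(g)=h$, extends to $G$ using that the support generates $G$, and then checks $\oA_h=\bigoplus_{\varphi(g)=h}A_g$. Where you differ is in how the extension to a group homomorphism is justified. The paper disposes of it in one sentence, appealing to $A_gA_{g'}\subset A_{gg'}$, $\oA_h\oA_{h'}\subset\oA_{hh'}$ and generation by the supports; as stated this only controls $\varphi$ on products with $A_gA_{g'}\neq 0$, and the verification that $\varphi$ respects \emph{all} relations among support elements is left implicit. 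You instead pull each $\chi\in\Hhat$ back to $\psi(\chi)\in\Ghat$ (injectivity of $\eta_\Gamma$ here needs only that the support generates $G$), record the identity $\chi(\varphi(g))=\psi(\chi)(g)$, and invoke character separation on $H$ --- which is precisely where the absence of elements of order $p$ and the divisibility of $F^\times$ enter --- both to pin down $\varphi(g)$ and to verify every relation $g_1\cdots g_k=g_1'\cdots g_l'$; this makes well-definedness completely explicit and is the more careful route. Your final verification of $\oA_h=\bigoplus_{\varphi(g)=h}A_g$ matches the paper's; you might add the one-line observation that $\varphi$ is onto $H$ (every $h\in\Supp_{\oGamma}A$ equals some $\varphi(g)$ since $\oA_h\neq 0$, and this support generates $H$), since Definition \ref{d3} asks for a surjective homomorphism.
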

\begin{proof}
The eigenspaces of $\eta_\Gamma(G)$ and $\eta_\oGamma(H)$ are $A_g$ and $\oA_h$ respectively for all $g\in \Supp_\Gamma A$ and $h\in \Supp_{\oGamma}A$. Since $\eta_\oGamma(H)\subset\eta_\Gamma(G)$ we have that for any $g\in\Supp_\Gamma A$ the eigenspace $A_g$ of $\eta_\Gamma(G)$ is contained in some eigenspace $\oA_h$ of $\eta_\oGamma(H)$ for some $h\in\Supp_{\oGamma}A$ where $h$ depends on $g$. Let $\phi:\Supp_\Gamma A\to \Supp_\oGamma A$ be the map defined by $\phi(g)=h$ for $g\in\Supp_\Gamma A$ where  $h\in \Supp_{\oGamma}A$ and $A_g\subset \oA_h$. The map $\phi$ extends to a homomorphism of $G$ onto $H$ since $A_gA_{g\prim}\subset A_{g\,g\prim}$ and $\oA_h\oA_{h\prim}\subset \oA_{h\,h\prim}$ by the property of gradings and the groups are generated by their supports respectively. Then $\oGamma$ is a coarsening of $\Gamma$.
\end{proof}

If $L$ is finite-dimensional, then $\Aut L$ is an algebraic group, and the image $\eta(\widehat{G})$ belongs to the class of algebraic groups called quasi-tori. Recall that a \emph{quasi-torus} is an algebraic group that is abelian and consists of semisimple elements. Conversely, given a quasi-torus $Q$ in $\Aut L$, we obtain the eigenspace decomposition of $L$ with respect to $Q$, which is a grading by the group of characters of $Q$, $G=\mathfrak{X}(Q)$.

In this paper, $L$ is a Melikyan algebra $\Mt$, where $\unn=(n_1,n_2)$ is a pair of positive integers --- see the definitions in the next section. Unless it is stated otherwise, $m$ is a positive integer and $\unn=(n_1,\dots,n_m)$ is an $m$-tuple of positive integers. We denote by $a$ and $b$ some $m$-tuples of non-negative integers and by $i,j,k,l$ some integers. 

\section{Melikyan Algebras and Their Standard\\
Gradings}

In this section we introduce some basic definitions, closely following \cite[Chapter 2]{slafpc}. We start by defining the commutative algebras $\Omn$ and the Witt algebras $\Wn$ which we will use to define the Melikyan algebras when $m=2$.

\begin{dfn}\label{d4}
Let $\Omn$ be the commutative algebra $$\Omn:=\left\{\di\sum_{0\leq a\leq\taun}\alpha(a)\xa\;|\;\alpha(a)\in F\right\}$$ over a field of characteristic $p$, where $\taun=(p^{n_1}-1,\dots,p^{n_m}-1)$, with multiplication $$\xa\xb=\binom{a+b}{a}x^{(a+b)},$$
where $\di\binom{a+b}{a}=\di\prod_{i=1}^m\binom{a_i+b_i}{a_i}$.

For $1\leq i\leq m$, let $\epsilon_i:=(0,\dots,0,1,0\dots, 0)$, where the 1 is at the $i$-th position, and $x_i:=x^{(\epsilon_i)}$.
\end{dfn}

There are standard derivations on $\Omn$ defined by $\partial_i(\xa)=x^{(a-\veps_i)}$ for $1\leq i\leq m$. 

\begin{dfn}\label{d5} Let $\Wn$ be the Lie algebra $$\Wn:=\left\{\di\sum_{1\leq i\leq m}f_i\partial_i\;|\;f_i\in\Omn\right\}$$ with the commutator defined by $$[f\partial_i,g\partial_j]=f(\partial_i g)\partial_j-g(\partial_jf)\partial_i,\quad f,g\in\Omn.$$
\end{dfn} 

The Lie algebras $\Wn$ are called \emph{Witt algebras}. $\Wn$ is a subalgebra of $\Der\Omn$, the Lie algebra of derivations of $\Omn$.

From now on the base field $F$ is algebraically closed and its characteristic is 5. We set $\tW=\Ot\tdo+\Ot\tdt$.
We define the map $\div:\Wt\to \Ot$ by 
$$\div(f_1\ddo+f_2\ddt):=\ddo(f_1)+\ddt(f_2)$$
for all $f_1,\ f_2\in \Ot$. Also set $$\di\til{f_1\ddo+f_2\ddt}:=f_1\tdo+f_2\tdt$$ for all $f_1,\ f_2\in \Ot$. 

\begin{dfn}
Let $\Mt:=\Ot\oplus \Wt\oplus\tW$ be the algebra whose multiplication is defined by the following equations. For all $D\in \Wt$, $E\in \tW$ and $f,f_1,f_2,g_1,g_2\in \Ot$ we set
$$\begin{array}{l}
\ [D,\tE]:= \til{[D,E]}+2\div(D)\tE,\\
\\
\ [D,f]:=D(f)-2\div(D)f,\\
\\
\ [f,\tE]:=fE\\
\\
\ [f_1,f_2]:=2(f_1\ddo(f_2)-f_2\ddo(f_1))\tdt+2(f_2\ddt(f_1)-f_1\ddt(f_2))\tdo.\\
\\
\ [f_1\tdo+f_2\tdt,g_1\tdo+g_2\tdt]:=f_1g_2-f_2g_1.
\end{array}$$
We call $\Mt$ the {\em Melikyan algebra}.
\end{dfn}

The algebras $\Omn$, $\Wn$, $\Mt$ defined above have well known canonical $\ZN$-gradings. 

\begin{dfn}Let $A=\Omn$, $\Wn$ or $\Mt$. The {\em canonical $\ZN$-grading of $A$},
$$A=\di\bigoplus_{i\in\ZN}=\{y\in A\,|\,\deg_A(y)=i\},$$
is defined by declaring their degrees, $\deg_O$, $\deg_W$ and $\deg_M$, respectively, as follows:
$$\begin{array}{lcl}\deg_O(x^{(a)})&:=&a_1+\cdots+a_m,\\\deg_W(x^{(a)}\partial_i)&:=&a_1+\cdots+a_m-1,\\
\deg_M(x^{(a)}\partial_i)&:=&3\deg_W(x^{(a)}\partial_i),\\
\deg_M(x^{(a)}\tdi)&:=&3\deg_W(x^{(a)}\partial_i)+2,\\ 
\deg_M(x^{(a)})&:=&3\deg_O(x^{(a)})-2,\end{array}$$
for $0\leq a\leq\taun$. The {\em canonical filtration of $A$}, is defined by declaring $A_{(i)}=\di\bigoplus_{j\geq i}A_i$. 
\end{dfn}
Note that $\Wt=\bigoplus_{i\in\ZN}M_{3i}$.

\begin{lmm}\label{l2}
Let $\oGamma_M:\Mt=\di\bigoplus_{(a_,a_2)\in\ZN^2}M_{(a_1,a_2)}$ where 
$$\begin{array}{lcl}
M_{3(a_1,a_2)}&:=&\Span\{x^{(a+\veps_i)}\ddi\,|\,1\leq i\leq 2\}\\\\
M_{(3a_1,3a_2)+\un}&:=&\Span\{x^{(a+\veps_i)}\tdi\,|\,1\leq i\leq 2\}\\\\
M_{(3a_1,3a_2)-\un}&:=&\Span\{\xa\}.\\
\end{array}$$
The decomposition above is $\ZN^2$-grading on $\Mt.\hfill\square$
\end{lmm}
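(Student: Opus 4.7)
The plan is to identify the decomposition of Lemma~\ref{l2} as coming from a natural multi-index assignment on the three summands of $\Mt$, rescaled by $3$ and shifted by a type-dependent element of $\{-\un,0,\un\}$. Define $\mu:\Mt\to\ZN^2$ on homogeneous basis elements by
\[
\mu(\xa)=a,\qquad \mu(x^{(a+\veps_i)}\ddi)=a,\qquad \mu(x^{(a+\veps_i)}\tdi)=a,
\]
and a \emph{type shift} $s$ by $s(y)=-\un,\,0,\,\un$ according as $y$ lies in $\Ot,\Wt,\tW$. Comparison with the formulas in the statement shows that $M_\alpha=\Span\{y\mid 3\mu(y)+s(y)=\alpha\}$. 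The sum $\Mt=\bigoplus_\alpha M_\alpha$ is direct because the three values of $s$ lie in distinct cosets of $3\ZN^2$, so the summands $\Ot$, $\Wt$, $\tW$ occupy disjoint homogeneous components, and within each summand the monomial basis is already $\mu$-separated.

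What remains is to verify $[M_\alpha,M_\beta]\subset M_{\alpha+\beta}$ for each of the five bracket rules defining $\Mt$. The three ``shift-free'' rules are immediate: the usual bracket on $\Wt$ is $\mu$-additive with output in $\Wt$, matching $0+0=0$. In $[D,\tE]=\til{[D,E]}+2\div(D)\tE$, both summands lie in $\tW$ with multi-index $\mu(D)+\mu(\tE)$, matching $0+\un=\un$. In $[D,f]=D(f)-2\div(D)f$, both summands lie in $\Ot$ with multi-index $\mu(D)+\mu(f)$, matching $0+(-\un)=-\un$. Finally, $[f,\tE]=fE\in\Wt$ has multi-index $\mu(f)+\mu(\tE)$, matching $(-\un)+\un=0$.

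The main bookkeeping, and the only real obstacle, appears in the two remaining rules, where the output multi-index differs by $\pm\un$ from the sum of the input multi-indices, and this discrepancy is exactly absorbed by the asymmetric type shifts. For $[f_1,f_2]$, each summand is of the form $h\tdi$ where $h\in\Ot$ involves one application of $\partial_k$ with $\{k,i\}=\{1,2\}$, so $h$ has multi-index $\mu(f_1)+\mu(f_2)-\veps_k$ and the corresponding $\tW$ element has multi-index $\mu(f_1)+\mu(f_2)-\veps_k-\veps_i=\mu(f_1)+\mu(f_2)-\un$; then $3(\mu(f_1)+\mu(f_2)-\un)+\un=(3\mu(f_1)-\un)+(3\mu(f_2)-\un)$. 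For $[\tE_1,\tE_2]=f_1g_2-f_2g_1\in\Ot$, if $\tE_1,\tE_2$ have multi-indices $a,b$ then $f_1g_2$ and $f_2g_1$ are both scalar multiples of $x^{(a+b+\un)}$, of multi-index $a+b+\un=\mu(\tE_1)+\mu(\tE_2)+\un$, and $3(a+b+\un)+(-\un)=(3a+\un)+(3b+\un)$. Once these two $\pm\un$ corrections are tracked, the grading property follows.
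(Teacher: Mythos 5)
Your verification is correct: the degree bookkeeping in all six bracket cases (including the $\pm\un$ corrections in $[f_1,f_2]$ and $[\tE_1,\tE_2]$) checks out, and the directness argument via the distinct cosets of $3\ZN^2$ is sound. The paper omits the proof of this lemma as a routine computation, and your argument is exactly that routine verification, so it matches the intended (implicit) approach; the only blemishes are cosmetic (you announce ``three'' shift-free rules but list four, and in the $[\tE_1,\tE_2]$ case one of $f_1g_2$, $f_2g_1$ may simply vanish).
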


\begin{rmr}\label{r2}
The support of the $\ZN^2$-grading $\oGamma_M$ does not generate $\ZN^2$. The support generates the subgroup $G=\langle(3i+j,j)\,|\,i,j\in\ZN\rangle$ which is isomorphic to $\ZN^2$. Hence we can define a $\ZN^2$-grading for which the support generates $\ZN^2$. Let $\phi_M:\ZN^2\to\ZN^2$ defined by $\phi_M((1,0))=(3,0)$ and $\phi_M((0,1))=(1,1)$. If we set $L_a=M_{\phi_M(a)}$ for $a\in\ZN^2$ then $\Gamma_M:\Mt=\bigoplus_{a\in\ZN^2}L_a$ is a $\ZN^2$-grading since $\phi_M(\ZN^2)=G$. Also since $L_{(-1,0)}=M_{(-3,0)}=\Span\{\ddo\}$ and $L_{(0,-1)}=M_{(-1,-1)}=F$ we have that the support of the $\Gamma_M$ grading generates $\ZN^2$.
\end{rmr}
Note that the grading in Lemma \ref{l2} is a coarsening of the $\Gamma_M$ grading. By Lemma \ref{lz} we have that $\eta_{\oGamma_M}(\Zhat)\subset\eta_{\Gamma_M}(\Zhat)$. We will mainly work with the grading $\oGamma_M$ and get results for $\Gamma_M$. We will show that $\eta_{\oGamma_M}(\Zhat)$ is a maximal abelian subgroup of $\Aut\Mt$ which implies that $\eta_{\oGamma_M}(\Zhat)=\eta_{\Gamma_M}(\Zhat)$. 
\begin{dfn}
We call the $\ZN^2$-grading $\Gamma_M$ in Remark \ref{r2} the {\em standard $\ZN^2$-grading on $\Mt$}. Let $\degm(y)$ and $\deg(y)$ be the degrees of $y$ with respect to the $\ZN^2$-gradings $\Gamma_M$ and $\oGamma_M$ respectively.
\end{dfn}

\begin{rmr}\label{r1} 
The canonical $\ZN$-grading is a coarsening of the $\ZN^2$-grading $\oGamma_M$ from Lemma \ref{l2} and hence a coarsening of the standard $\ZN^2$-grading $\Gamma_M$. Explicitly, $$M_i=\di\bigoplus_{a_1+a_2=i}M_{(a_1,a_2)}.$$
\end{rmr}

\begin{dfn}\label{d6}
Let $G$ be an abelian group and $\varphi:\ZN^2\to G$ a homomorphism. The decomposition $\Mt=\bigoplus_{g\in G} M_g$, 
given by
$$M_g=\Span\{y\in \Mt\,|\,\varphi(\degm(y))=g\},$$
is a $G$-grading on $\Mt$. We call such decomposition a \emph{standard $G$-grading induced by $\varphi$} on $\Mt$. We will refer to a standard $G$-grading induced by $\varphi$ as a \emph{standard $G$-grading} when $\varphi$ is not specified.  
\end{dfn}

The grading $\oGamma_M$ on $\Mt$ gives rise to a quasi-torus $\eta_{\oGamma}(\Zhat)$. We will show later that $\eta_{\oGamma_M}(\Zhat)$ is actually a maximal torus. Let $\ut^{a}:=t_1^{a_1}t_2^{a_2}$ for all $\ut=(t_1,t_2)\in (F^\times)^2$ and $\aa:=t_1t_2$. We define $\lambda:(F^\times)^2\to\Aut\Mt$ where
$$\begin{array}{lcl}
\lambda(\ut)\xa\ddi&:=&\tai\xa\ddi\\
\lambda(\ut)\xa\tdi&:=&\tai\aa\xa\tdi\\
\lambda(\ut)\xa&:=&\ta\ai\xa.\\
\end{array}$$
For any element $y$ in $M_{(a_1,a_2)}$ of the grading $\oGamma_M$ we have $\lambda(\ut)(y)=\ut^a y$ which is the same as saying $\lambda(\ut)(y)=\ut^{\deg(y)}y$. 

\begin{lmm}\label{l3}$\lambda$ is a homomorphism of algebraic groups.
\end{lmm}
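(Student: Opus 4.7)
The plan is to reinterpret the defining formulas for $\lambda(\ut)$ as saying that $\lambda(\ut)$ acts on each homogeneous component $M_{(a_1,a_2)}$ of the $\oGamma_M$-grading by the scalar $\ut^{(a_1,a_2)} = t_1^{a_1}t_2^{a_2}$. Once this single identification is established, each of the three things to verify — that $\lambda(\ut)$ is an algebra automorphism, that $\lambda$ is a group homomorphism, and that $\lambda$ is a morphism of algebraic groups — becomes essentially formal.

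First I would check the identification on each of the three families of homogeneous generators. A generator $\xa\ddi$ lies in $M_{3(a-\veps_i)}$ by Lemma~\ref{l2}, and the formula gives $\lambda(\ut)\xa\ddi = \ut^{3a-3\veps_i}\xa\ddi$, matching $\ut^{\deg(\xa\ddi)}$. A generator $\xa\tdi$ lies in $M_{3(a-\veps_i)+\un}$, and the formula gives $\lambda(\ut)\xa\tdi = \ut^{3a-3\veps_i}\aa\,\xa\tdi = \ut^{3a-3\veps_i+\un}\xa\tdi$, again matching. Finally $\xa$ lies in $M_{3a-\un}$, and the formula gives $\lambda(\ut)\xa = \ta\ai\xa = \ut^{3a-\un}\xa$. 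This step is the only one that requires any bookkeeping at all, and it is the main (and only) potential source of trouble.

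With the identification $\lambda(\ut)(y) = \ut^{\deg(y)}y$ in hand for every $\oGamma_M$-homogeneous $y$, I would verify that $\lambda(\ut)$ respects the bracket on homogeneous pairs $y_1\in M_a$, $y_2\in M_b$: since $\oGamma_M$ is a grading, $[y_1,y_2]\in M_{a+b}$, so
\begin{equation*}
\lambda(\ut)[y_1,y_2] = \ut^{a+b}[y_1,y_2] = [\ut^a y_1,\ut^b y_2] = [\lambda(\ut)y_1,\lambda(\ut)y_2].
\end{equation*}
Bilinearity extends this to arbitrary pairs, and invertibility of $\lambda(\ut)$ follows from the fact that every eigenvalue $\ut^a$ lies in $F^\times$; hence $\lambda(\ut)\in\Aut\Mt$. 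The group homomorphism property is equally immediate: on each $y\in M_a$, $\lambda(\ut\us)(y) = (\ut\us)^a y = \ut^a\us^a y = \lambda(\ut)\lambda(\us)(y)$.

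For the algebraic-group structure, I would fix the basis of $\Mt$ consisting of the standard homogeneous elements $\xa\ddi$, $\xa\tdi$, $\xa$. In this basis the endomorphism $\lambda(\ut)$ is diagonal, with diagonal entries Laurent monomials $t_1^{k_1}t_2^{k_2}$ — which are regular functions on the algebraic group $(F^\times)^2$. Thus $\lambda$ is a morphism of algebraic varieties into $\GL(\Mt)$ whose image lies in $\Aut\Mt$, and combined with the group homomorphism property already established, this exhibits $\lambda$ as a morphism of algebraic groups.
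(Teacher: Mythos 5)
Your proposal is correct and follows essentially the same route as the paper: identify $\lambda(\ut)$ as acting by the scalar $\ut^{\deg(y)}$ on each $\oGamma_M$-homogeneous element (the paper asserts this identification just before the lemma), use additivity of degrees under the bracket to get $\lambda(\ut)\in\Aut\Mt$, compute the homomorphism property componentwise, and note that the matrix entries are monomials in $t_1^{\pm1},t_2^{\pm1}$, hence regular on $(F^\times)^2$. Your version merely spells out the generator-by-generator degree check and the regularity argument that the paper leaves implicit.
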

\begin{proof}
We start by showing that for $\ut\in(F^\times)^2$ we have $\lambda(\ut)\in\Aut \Mt$. Lemma \ref{l2} gives us that $\deg([y,z])=\deg(y)+\deg(z)$ when $y$, $z$ are homogeneous elements. For homogeneous $y$, $z$ we have 
$$\begin{array}{lcl}\lambda(\ut)([y,z])&=&\ut^{\deg([y,z])}[y,z]=\ut^{\deg(y)+\deg(z)}[y,z]=\ut^{\deg(y)}\ut^{\deg(z)}[y,z]\\
&=&[\lambda(\ut)(y),\lambda(\ut)(z)].\end{array}$$
Hence $\lambda(\ut)\in\Aut \Mt$.

Now we show that $\lambda$ is a homomorphism. Let $\us$, $\ut\in(F^\times)^2$ and $y$ be a homogeneous element. Then
$$\lambda(\us\,\ut)y=(\us\,\ut)^{\deg(y)}y=\us^{\deg(y)}\ut^{\deg(y)}y=\us^{\deg(y)}\lambda(\ut)(y)=\lambda(\us)(\lambda(\ut)(y))$$
which shows that $\lambda$ is a homomorphism.

It is obvious that $\lambda$ is a rational map and it is a homomorphism.
\end{proof}

Let $T_M:=\lambda((F^\times)^2)$. The kernel of $\lambda$ is $\{(t_1,t_2)\in(F^\times)^2\,|\,t_1^3=t_2^3=1,\,t_1t_2=1\}$. Since the kernel is finite and $\lambda$ is a regular homomorphism we have that $T_M$ is a torus.

\begin{lmm}\label{l4}
The torus $T_M$ is $\eta_{\oGamma_M}(\Zhat)$.
\end{lmm}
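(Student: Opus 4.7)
The plan is to identify both $T_M$ and $\eta_{\oGamma_M}(\Zhat)$ as subgroups of $\Aut\Mt$ by transporting them to a common parameterization by $(F^\times)^2$. The key fact, already noted right after the definition of $\lambda$, is that $\lambda(\ut)(y) = \ut^{\deg(y)}\,y$ for every $\oGamma_M$-homogeneous element $y$, while by~(\ref{eq1}) the automorphism $\eta_{\oGamma_M}(\chi)$ acts on such $y$ by $\chi(\deg(y))\,y$. Both maps are thus of the form ``scale each homogeneous component by a function of its degree,'' so matching the two images reduces to matching characters of $\ZN^2$ with elements of the torus $(F^\times)^2$.

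To do this, I would first parameterize $\Zhat$ explicitly. Since $\ZN^2$ is freely generated by $\veps_1,\veps_2$, the assignment $\chi\mapsto(\chi(\veps_1),\chi(\veps_2))$ is a bijection $\Zhat\to(F^\times)^2$ whose inverse sends $\ut=(t_1,t_2)\in(F^\times)^2$ to the character $\chi_\ut$ defined by $\chi_\ut(a):=\ut^a=t_1^{a_1}t_2^{a_2}$. Each $\chi_\ut$ is a genuine homomorphism because exponentiation in $F^\times$ is additive in the exponent, and any $\chi\in\Zhat$ satisfies $\chi(a)=\chi(\veps_1)^{a_1}\chi(\veps_2)^{a_2}$, so nothing is lost in this identification.

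With this identification in hand, the substance of the lemma is a one-line computation on homogeneous elements. For any $\ut\in(F^\times)^2$ and any $y\in M_a$ (with respect to $\oGamma_M$),
$$\eta_{\oGamma_M}(\chi_\ut)(y)=\chi_\ut(a)\,y=\ut^a\,y=\ut^{\deg(y)}\,y=\lambda(\ut)(y),$$
so $\eta_{\oGamma_M}(\chi_\ut)=\lambda(\ut)$ after extending linearly to all of $\Mt$. Letting $\ut$ range over $(F^\times)^2$ yields $T_M\subseteq\eta_{\oGamma_M}(\Zhat)$, and letting $\chi$ range over $\Zhat$ (with $\ut:=(\chi(\veps_1),\chi(\veps_2))$, so that $\chi=\chi_\ut$) yields the reverse inclusion. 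I do not anticipate any real obstacle: once the two definitions are placed side by side, the lemma reduces to the standard identification of the character group of a free abelian group of rank two with the two-dimensional algebraic torus $(F^\times)^2$.
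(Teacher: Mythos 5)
Your proof is correct and follows essentially the same route as the paper: both identify characters $\chi\in\Zhat$ with pairs $\ut=(\chi(\veps_1),\chi(\veps_2))\in(F^\times)^2$ and verify on $\oGamma_M$-homogeneous elements that $\eta_{\oGamma_M}(\chi_\ut)=\lambda(\ut)$, yielding the two inclusions. The paper merely writes the two containments as separate paragraphs rather than packaging them as a single bijective identification.
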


\begin{proof}
First we show that $\eta_{\oGamma_M}(\widehat{
\ZN^2})\subset T_M$. Let $\chi\in\Zhat$ and $\chi((1,0))=t_1\in F^\times$ and $\chi((0,1))=t_2\in F^\times$. For $y\in M_{(a_1,a_2)}$ we have $$\begin{array}{lcl}\eta_{\oGamma_M}(\chi)(y)&=&\chi((a_1,a_2))y=\chi((a_1,0))\chi((0,a_2))y\\
&=&\chi((1,0))^{a_1}\chi((0,1))^{a_2}y
=(t_1,t_2)^{\deg(y)}y=\lambda((t_1,t_2))(y).\end{array}$$
Hence $\eta_{\oGamma_M}(\chi)\in T_M$ and we have $\eta_{\oGamma_M}(\Zhat)\subset T_M$. 

Now we show that $T_M\subset\eta_{\oGamma_M}(\Zhat)$. For $\ut=(t_1,t_2)\in(F^\times)^2$ let $\chi_{\ut}:\ZN^2\to F^\times$ be the element of $\Zhat$ defined by $\chi_{\ut}(a)=\ut^a$ for all $a\in\ZN^2$. For $y\in M_a$, $a\in\ZN^2$ we have 
$$\lambda(\ut)(y)=\ut^ay=\chi_{\ut}(a)y=\eta_{\oGamma_M}(\chi_{\ut})(y).$$
Hence $\lambda(\ut)\in\eta_{\oGamma_M}(\Zhat)$ and we have $T_M\subset\eta_{\oGamma_M}(\Zhat)$.
\end{proof}

The following proposition shows that if we want to know more about the quasi-torus $\eta(\G)$ up to conjugation by an automorphism of $\Mt$ then we should look at the normalizer of a maximal torus in $\Aut \Mt$. This follows from \cite[Corollary 3.28]{plat}.

\begin{prp}\label{p1}
A quasi-torus of an algebraic group belongs to the normalizer of a maximal torus. $\hfill\square $
\end{prp}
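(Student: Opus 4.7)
The plan is to produce a maximal torus $T$ of $G$ that is stable under conjugation by $Q$; then automatically $Q\subseteq N_G(T)$. My proof would follow three standard steps in the theory of linear algebraic groups.

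First, I would analyze $Q$ itself. The identity component $Q^{0}$ is connected, abelian, and consists of semisimple elements, so $Q^{0}$ is a torus. Moreover $Q/Q^{0}$ is a finite abelian group, since $Q$ is algebraic.

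Next, I would pass to the centralizer $C=C_G(Q^{0})$. Because $Q$ is abelian, $Q\subseteq C$. The identity component $C^{0}$ contains $Q^{0}$ in its center, so every maximal torus of $C^{0}$ contains $Q^{0}$. Since any torus of $G$ containing $Q^{0}$ already lies in $C$, the maximal tori of $C^{0}$ are precisely the maximal tori of $G$ containing $Q^{0}$. The task thus reduces to producing a maximal torus of $C^{0}$ that is stable under conjugation by $Q$.

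Finally, I would apply two fixed-point arguments. The solvable (indeed abelian) group $Q$ acts on the projective variety of Borel subgroups of $C^{0}$, so by Borel's fixed-point theorem $Q$ normalizes some Borel $B\subseteq C^{0}$. Inside $B$ the maximal tori are parametrized by the affine space $B_u$ (the unipotent radical), and the action of the quasi-torus $Q$ on this unipotent group admits a fixed point; this furnishes the desired $Q$-stable maximal torus $T\subseteq B$. The genuine obstacle is the last step, producing a fixed point for the action of a group of semisimple elements on a connected unipotent group, carried out in detail in \cite[Corollary 3.28]{plat} to which the statement appeals.
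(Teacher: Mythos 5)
Your first two reductions are sound: $Q^{0}$ is a torus, $Q/Q^{0}$ is finite, and the maximal tori of $C^{0}=C_G(Q^{0})^{0}$ are exactly the maximal tori of $G$ containing $Q^{0}$. The argument breaks at the next step. Borel's fixed-point theorem requires the acting solvable group to be \emph{connected}, and $Q$ need not be connected; abelianness does not substitute for connectedness. Worse, the intermediate claim you want is simply false: a quasi-torus need not normalize any Borel subgroup of $C^{0}$. Take $G=\mathrm{PGL}_2$ and let $Q$ be the Klein four-group generated by the images of $\mathrm{diag}(1,-1)$ and $\left(\begin{smallmatrix}0&1\\1&0\end{smallmatrix}\right)$; every nontrivial element is semisimple of order $2$, so $Q$ is a quasi-torus with $Q^{0}=\{1\}$ and $C^{0}=G$. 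Borel subgroups of $\mathrm{PGL}_2$ correspond to points of $\mathbb{P}^1$, and the three involutions act by $z\mapsto -z$, $z\mapsto 1/z$, $z\mapsto -1/z$, with fixed-point sets $\{0,\infty\}$, $\{\pm 1\}$, $\{\pm\sqrt{-1}\}$; there is no common fixed point, hence no $Q$-stable Borel. Nevertheless $Q$ normalizes the diagonal maximal torus (the second generator inverts it), which is exactly what the proposition asserts. So any correct proof must produce a $Q$-stable maximal torus without routing through a $Q$-stable Borel, and your subsequent step (a fixed point on the set of maximal tori of $B$, which is a homogeneous space under $B_u$) never gets off the ground because the required $B$ may not exist.

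This is precisely why the cited result (Platonov's Corollary 3.28, closely related to the Borel--Serre theorem on supersolvable groups of semisimple elements) is proved by a different scheme: one proceeds by induction, using a Steinberg-type statement that a \emph{single} semisimple automorphism of a connected algebraic group stabilizes a Borel subgroup and a maximal torus of it --- here it is the connectedness of the group acted upon, not of the acting group, that is used --- and then passes to centralizer/fixed-point subgroups of that element to handle the rest of $Q$ one step at a time. For comparison, the paper itself gives no argument for this proposition beyond the appeal to Platonov; but as a free-standing proof your sketch contains the fatal step above, so it does not establish the statement.
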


In Section \ref{sA} we will show that $T_M$ is a maximal torus of $\Aut \Mt$. This leads us to look at the normalizer of the restriction of $T_M$ on $\Wt$ in $\Aut \Wt$. Using that the automorphisms of $\Wt$ can extend to $\Aut\Mt$ (-- see \cite{kuz}) we can then extend the information of the normalizer in $\Aut \Wt$ to get the normalizer of $T_M$ in $\Aut \Mt$.

The goal of Section 3 is to show that if $G$ has no elements of order five then $\eta(\widehat{G})$ is always contained in a maximal torus. 
\section{The automorphism groups of Melikyan algebras}\label{sA}

The automorphism group of $\Mt$ respects the canonical filtration on $\Mt$ (-- see proof of \cite[Theorem 4.7]{kuzz}). Also \cite{kuz} says that any automorphism of $\Wt$ can be extended to an automorphism of $\Mt$. 

 We start by looking at a maximal torus of $\Aut \Wt$. Let 
$$T_W:=\{\psi\in\Aut\Wt\;|\;\psi(\xa\partial_k)=t_1^{a_1} t_2^{a_2}t_k^{-1}\xa\partial_k,\ t_j\in F^\times\}.$$
According to \cite[p. 371]{slafpc}, $T_W$ is indeed a maximal torus of $\Wt$. 

Let $\Aut_W\Mt=\{\Psi\in\Aut\Mt\,|\,\Psi(\Wt)=\Wt\}$ and\\ $\pi:\Aut_W\Mt\to\Aut\Wt$ is the respective restriction map on\\ $\Aut_W\Mt$. Since $T_M=\eta_{\oGamma_M}(\Zhat)$ with respect to the $\ZN^2$-grading $\oGamma_M$ on $\Mt$ and $\Wt$ is a graded subspace of this grading we have\\ $T_M\subset\Aut_W\Mt$.

\begin{lmm}\label{l5}
The restriction of $T_M$ to $\Wt$ is $T_W$. 
\end{lmm}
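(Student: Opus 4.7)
The plan is to verify the equality by explicitly computing the restriction of each $\lambda(\ut)\in T_M$ to $\Wt$ and matching it with the parameterization of $T_W$.

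First I would note that $\Wt$ is a graded subspace of the $\ZN^2$-grading $\oGamma_M$: indeed, the spanning elements $\xa\ddk$ all lie in components of the form $M_{3(b_1,b_2)}$ (with $b=a-\veps_k$), so the homogeneous automorphism $\lambda(\ut)$ preserves $\Wt$. Hence we do have a well-defined restriction map $T_M\to\Aut\Wt$.

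Next I would compute the action of $\lambda(\ut)$ on a typical basis element $\xa\ddk$ of $\Wt$. By the definition of $\lambda$, this action is scalar multiplication by $\ut^{\deg(\xa\ddk)}=\ut^{3a-3\veps_k}$, which rewrites as
$$\lambda(\ut)(\xa\ddk)=t_1^{3a_1}t_2^{3a_2}t_k^{-3}\,\xa\ddk=(t_1^3)^{a_1}(t_2^3)^{a_2}(t_k^3)^{-1}\,\xa\ddk.$$
Comparing this with the defining formula $\psi(\xa\partial_k)=s_1^{a_1}s_2^{a_2}s_k^{-1}\xa\partial_k$ for an element of $T_W$ with parameters $(s_1,s_2)\in(F^\times)^2$, one sees that $\lambda(\ut)|_{\Wt}$ is precisely the element of $T_W$ corresponding to $(s_1,s_2)=(t_1^3,t_2^3)$. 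This shows the inclusion $\lambda((F^\times)^2)|_{\Wt}\subset T_W$.

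For the reverse inclusion, given any $(s_1,s_2)\in(F^\times)^2$, I would choose cube roots $t_i\in F^\times$ with $t_i^3=s_i$; such roots exist because $F$ is algebraically closed and $\cha F=5$ is coprime to $3$. Then by the computation above $\lambda(t_1,t_2)|_{\Wt}$ is the prescribed element of $T_W$, giving $T_W\subset T_M|_{\Wt}$ and completing the proof. No step here is really an obstacle; the only point requiring a moment's attention is the matching of parameters via the cube map, which is surjective on $F^\times$ in characteristic $5$.
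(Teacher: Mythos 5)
Your proof is correct and follows essentially the same route as the paper: compute $\lambda(\ut)$ on the basis elements $\xa\ddk$ to see the restriction lands in $T_W$ with parameters $(t_1^3,t_2^3)$, and use the surjectivity of the cube map on $F^\times$ (algebraic closure) to realize every element of $T_W$ this way. The only difference is cosmetic -- the paper treats the inclusion $\pi(T_M)\subset T_W$ as obvious, while you spell it out.
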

\begin{proof}
We start by showing $T_W\subset \pi(T_M)$. For any $\psi\in T_W$ we have a pair $(s_1,s_2)\in(F^\times)^2$ such that $\psi(\xa\ddi)=s_1^{a_1}s_2^{a_2}s_i\inv\xa\ddi$. For any element $u$ of $F^\times$ there is at least one element $v$ such that $v^3=u$ because $F$ is algebraically closed. Hence there exist $t_1$ and $t_2$ in $F^\times$ such that $t_1^3=s_1$ and $t_2^3=s_2$. Computing $\lambda(\ut)$ on $\xa\ddi$ we get $$\lambda(\ut)(\xa\ddi)=t_1^{3a_1}t_2^{3a_2}t_i^{-3}\xa\ddi=s_1^{a_1}s_2^{a_2}s_i\inv\xa\ddi.$$
This shows that $\psi=\pi(\lambda((t_1,t_2)))\in \pi(T_M)$ and we have $T_W\subset \pi(T_M)$.

The inclusion $\pi(T_M)\subset T_W$ is obvious.
\end{proof}

The kernel of $\pi$ on $T_M$ is $\{\lambda(\ut)\in T_M\,|\,t_1^3=t_2^3=1\}$.

\begin{lmm}\label{l6}\cite[Lemma 5]{kuz} If $\Theta\in \Aut_W\Mt$ is such that $\pi(\Theta)=\Id_W$ then for $y\in M_i$, $i\in\ZN$, there exists a $\beta$ such that $\Theta(y)=\beta^iy$ where $\beta^3=1.\hfill\square$\end{lmm}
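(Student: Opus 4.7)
The plan is to exploit the fact that, because $\pi(\Theta) = \Id_{\Wt}$ and $\Theta$ is a Lie automorphism of $\Mt$,
$$\Theta([D,y]) = [\Theta(D),\Theta(y)] = [D,\Theta(y)] \quad \text{for all } D \in \Wt,\ y \in \Mt,$$
so $\Theta$ is a $\Wt$-module endomorphism of $\Mt$ with respect to the Melikyan bracket. The three summands in the decomposition $\Mt = \Wt \oplus \Ot \oplus \tW$ are $\Wt$-submodules (under the respective actions specified in the definition of the Melikyan bracket), and the first key step is to prove that these three modules are irreducible and pairwise non-isomorphic.

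For irreducibility of $\Ot$: starting from any nonzero element of a $\Wt$-submodule, repeatedly apply $\ad \partial_i$, which equals $\partial_i$ on $\Ot$ because $\div(\partial_i)=0$, to lower divided-power degree until one reaches a nonzero multiple of $1$; then $[x^{(a+\veps_i)}\partial_i, 1] = -2\xa$ produces every $\xa$. An analogous argument handles $\tW$ (using $[\partial_i,\tE] = \til{\partial_i E}$ to reduce to $\tdo,\tdt$, then $[x^{(a+\veps_i)}\partial_i,\tdi] = \xa\tdi$ to expand), and simplicity of $\Wt$ covers the last case. For non-isomorphism, I would compute the eigenvalues of the semisimple element $x_1\partial_1 \in \Wt$ on the lowest-degree elements of each summand:
$$[x_1\partial_1,\partial_1] = -\partial_1,\qquad [x_1\partial_1, 1] = -2\cdot 1,\qquad [x_1\partial_1,\tdo] = \tdo,$$
so the three lowest-degree $x_1\partial_1$-weight spectra are disjoint, ruling out any $\Wt$-module isomorphism among them. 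By Schur's Lemma, $\Theta$ then preserves each summand and acts on it as a scalar: write $\Theta|_{\Wt} = \Id$, $\Theta|_{\Ot} = \alpha\Id$, $\Theta|_{\tW} = \gamma\Id$.

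To relate $\alpha$ and $\gamma$, I would apply $\Theta$ to two defining identities of the Melikyan multiplication. Taking $f = x_1,\ \tE = \tdo$ in $[f,\tE] = fE \in \Wt$ (so $fE = x_1\partial_1 \ne 0$) gives $\alpha\gamma\,[x_1,\tdo] = [x_1,\tdo]$, hence $\alpha\gamma = 1$. Taking $f_1 = x_1,\ f_2 = x_2$ in the formula for $[f_1,f_2] \in \tW$ yields a nonzero element of $\tW$, and gives $\alpha^2 = \gamma$. Combining, $\alpha^3 = 1$. Setting $\beta := \alpha$, we have $\beta^3 = 1$, $\Theta|_{\Ot} = \beta\Id$, $\Theta|_{\tW} = \beta^2\Id$. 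Since the canonical $\ZN$-degrees of elements of $\Wt$, $\Ot$, $\tW$ are congruent to $0,\,1,\,2\pmod 3$ respectively, the formula $\Theta(y) = \beta^i y$ holds for every homogeneous $y \in M_i$.

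The main obstacle I expect is the irreducibility and pairwise non-isomorphism of $\Ot$, $\Wt$, $\tW$ as $\Wt$-modules in characteristic $5$: the divided-power structure together with the $\div$-twist in the Melikyan bracket require some care to rule out exotic submodules or unexpected intertwining maps. Once this structural fact is secured, the remainder of the proof reduces to the two bracket computations above.
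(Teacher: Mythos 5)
The paper itself offers no proof of this lemma: it is imported verbatim from Kuznetsov--Mulyar \cite[Lemma 5]{kuz}, which is why the statement ends with a box. Your argument is therefore necessarily a different, self-contained route, and its skeleton is sound: since $\pi(\Theta)=\Id_W$, $\Theta$ commutes with $\ad D$ for all $D\in\Wt$; the summands of $\Mt=\Wt\oplus\Ot\oplus\tW$ are $\Wt$-submodules (the components of the mod-$3$ coarsening of the canonical $\ZN$-grading); granted irreducibility and pairwise non-isomorphism, Schur's lemma makes $\Theta$ act by $1$, $\alpha$, $\gamma$ on the three pieces, and your two bracket evaluations are correct: $[x_1,\tdo]=x_1\ddo\neq 0$ gives $\alpha\gamma=1$, and $[x_1,x_2]=3x_1\tdo+3x_2\tdt\neq 0$ gives $\alpha^2=\gamma$, hence $\alpha^3=1$; since the canonical degrees on $\Wt$, $\Ot$, $\tW$ are $\equiv 0,1,2\pmod 3$, this is exactly $\Theta(y)=\beta^i y$ on $M_i$ with $\beta=\alpha$.

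The one step that does not close as written is the non-isomorphism of $\Wt$ and $\tW$ (dimension already separates $\Ot$ from both). Disjointness of the $x_1\ddo$-eigenvalues on the lowest-degree elements does not by itself exclude an intertwiner, because an abstract $\Wt$-module isomorphism need not respect the $\ZN$-grading; and you cannot repair this by passing to full spectra, since $\ad(x_1\ddo)$ (indeed the torus spanned by $x_1\ddo$ and $x_2\ddt$) has identical spectra with multiplicities on $\Wt$ and $\tW$: the eigenvalue of $x^{(a)}\ddj$ is $a_1-\delta_{j1}$ and of $x^{(a)}\tdj$ is $a_1-\delta_{j1}+2$, so every residue mod $5$ occurs $2\cdot 5^{n_1+n_2-1}$ times in both. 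The missing observation is that the lowest-degree part of each summand is intrinsic: it is the joint kernel of $\ad\ddo$ and $\ad\ddt$, namely $F1$, $\Span\{\ddo,\ddt\}$, $\Span\{\tdo,\tdt\}$ respectively, hence is preserved by any $\Wt$-module map; on these kernels $x_1\ddo$ has eigenvalue sets $\{3\}$, $\{4,0\}$, $\{1,2\}$ mod $5$, which are disjoint, so no isomorphism exists and your Schur step goes through. Two smaller repairs: the generation of $\Ot$ from $1$ via $[x^{(a+\veps_i)}\ddi,1]=3\xa$ misses the top monomial $x^{(\taun)}$ (and similarly for the top of $\tW$); one extra bracket, e.g. $[x^{(2\veps_1)}\ddo,x^{(\taun-\veps_1)}]=3x^{(\taun)}$, fills this in.
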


We now fix $\beta$ to be a primitive third root of unity and set $\Theta:=\lambda(\beta^2,\beta^2)$. Note that $\Theta\in T_M$.
\begin{crl}\label{c2}
Let $\Psi$ and $\Phi$ be elements of $\Aut_W\Mt$. If $\pi(\Psi)=\pi(\Phi)$ then there exists an $l$ such that $0\leq l\leq2$ and $\Psi=\Phi\Theta^l$.
\end{crl}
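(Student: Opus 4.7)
The strategy is to reduce the comparison of $\Psi$ and $\Phi$ to a comparison with the identity and then invoke Lemma~\ref{l6}. Since $\Aut_W\Mt$ is a subgroup of $\Aut\Mt$, the composition $\Xi := \Phi^{-1}\Psi$ again belongs to $\Aut_W\Mt$, and because $\pi$ is a group homomorphism (restriction is compatible with composition), the hypothesis $\pi(\Psi)=\pi(\Phi)$ yields $\pi(\Xi)=\Id_W$.

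Lemma~\ref{l6} then supplies a scalar $\beta'$ with $(\beta')^3=1$ such that $\Xi(y)=(\beta')^i y$ for every $y\in M_i$ in the canonical $\ZN$-grading. In particular $\beta'\in\{1,\beta,\beta^2\}$.

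Next I compute how $\Theta=\lambda(\beta^2,\beta^2)$ acts on $M_i$. By Remark~\ref{r1}, $M_i$ decomposes along the finer grading $\oGamma_M$ as $M_i=\bigoplus_{a_1+a_2=i}M_{(a_1,a_2)}$, and the formula $\lambda(\ut)(y)=\ut^{\deg(y)}y$ for $\oGamma_M$-homogeneous $y$ immediately gives $\Theta(y)=\beta^{2(a_1+a_2)}y=\beta^{2i}y$. Consequently $\Theta^l$ acts on $M_i$ as the scalar $\beta^{2il}$.

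The proof concludes by matching scalars: choose $l\in\{0,1,2\}$ so that $\beta^{2l}=\beta'$. Since the map $l\mapsto\beta^{2l}$ from $\{0,1,2\}$ onto the three cube roots of unity is a bijection (take $l=0$ for $\beta'=1$, $l=1$ for $\beta'=\beta^2$, $l=2$ for $\beta'=\beta$), such an $l$ always exists. With this $l$ the automorphisms $\Xi$ and $\Theta^l$ agree on every canonical homogeneous component $M_i$ and hence on all of $\Mt$, giving $\Psi=\Phi\Theta^l$. There is no real obstacle; the only small point to verify is the compatibility between $\Theta$'s action and the canonical $\ZN$-grading, which is immediate from Remark~\ref{r1} and the definition of $\lambda$.
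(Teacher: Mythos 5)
Your proof is correct and takes essentially the same route as the paper: form $\Xi=\Phi\inv\Psi$, observe $\pi(\Xi)=\Id_W$, and apply Lemma~\ref{l6}. The only difference is that you explicitly verify that an automorphism acting by the scalar $(\beta')^i$ on each $M_i$ must equal $\Theta^l$ for a suitable $l$ (using Remark~\ref{r1} and the definition of $\lambda$), a step the paper's proof leaves implicit.
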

\begin{proof}
If $\pi(\Psi)=\pi(\Phi)$ then $\pi(\Phi\inv\Psi)=\Id_W$. By Lemma \ref{l6} we have $\Phi\inv\Psi=\Theta^l$ for some $0\leq l\leq 2$.
\end{proof}

\begin{crl}\label{c3}
If $\Psi\in \Aut_W\Mt$ is such that $\pi(\Psi)\in T_W$ then $\Psi\in T_M$.
\end{crl}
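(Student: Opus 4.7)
The plan is to assemble Corollary \ref{c3} directly from the three preceding results: Lemma \ref{l5}, Lemma \ref{l6}, and Corollary \ref{c2}. The key observation is that $T_M$ is a subgroup of $\Aut_W\Mt$, so once I reduce the problem to analyzing the fiber of $\pi$ over a point of $T_W$, the kernel information packaged in Corollary \ref{c2} finishes the argument.

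Given $\Psi \in \Aut_W\Mt$ with $\pi(\Psi) \in T_W$, the first step is to lift $\pi(\Psi)$ back to $T_M$. By Lemma \ref{l5}, we have $\pi(T_M) = T_W$, so there exists $\Phi \in T_M$ such that $\pi(\Phi) = \pi(\Psi)$. Concretely, one may take $\Phi = \lambda(\ut)$ for a suitable $\ut = (t_1,t_2) \in (F^\times)^2$ obtained by choosing cube roots of the parameters $(s_1,s_2)$ that describe $\pi(\Psi)$ on the generators $\xa \ddi$ of $\Wt$, exactly as in the proof of Lemma \ref{l5}.

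Next, since $\Psi$ and $\Phi$ lie in $\Aut_W\Mt$ and agree under $\pi$, Corollary \ref{c2} applies: there exists $l \in \{0,1,2\}$ such that $\Psi = \Phi\Theta^l$, where $\Theta = \lambda(\beta^2,\beta^2)$ for a fixed primitive third root of unity $\beta$. The final step is simply to observe that $\Theta \in T_M$ by construction (it is in the image of $\lambda$), and $\Phi \in T_M$ by choice, so $\Psi = \Phi\Theta^l$ is a product of elements of the abelian group $T_M$, hence lies in $T_M$.

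There is no serious obstacle here; the statement is essentially a bookkeeping consequence. The only point that must be checked carefully is that the kernel ambiguity in lifting from $T_W$ to $T_M$ is already absorbed by the factor $\Theta^l$, which is exactly what Lemma \ref{l6} and Corollary \ref{c2} guarantee and is why $\Theta$ was chosen to lie inside $T_M$ in the first place.
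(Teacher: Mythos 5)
Your proof is correct and follows the same route as the paper: lift $\pi(\Psi)$ to some $\Phi\in T_M$ via Lemma \ref{l5}, apply Corollary \ref{c2} to write $\Psi=\Phi\Theta^l$, and conclude using $\Theta=\lambda(\beta^2,\beta^2)\in T_M$. The only difference is that you spell out explicitly that $\Theta\in T_M$, a fact the paper records just before Corollary \ref{c2} and uses implicitly in its one-line argument.
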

\begin{proof}
Lemma \ref{l5} shows that there exists $\Phi\in T_M$ such that $\pi(\Psi)=\pi(\Phi)$ and Corollary \ref{c2} says that $\Psi=\Phi\Theta^l$ for some $1\leq l\leq 2$. Hence $\Psi\in T_M$.\end{proof}

In order to describe the normalizers in $\Aut\Wt$ and $\Aut\Mt$ we introduce the automorphism $\upsilon$ of $\Ot$ that induces an automorphism $\sigma$ of $\Wt$ and finally we extend $\sigma$ to $\Aut\Mt$. For $\unn=(n_1,n_2)$ we define $\oa:=(a_2,a_1)$ for $a=(a_1,a_2)\in\ZN^2$. Let $n_1=n_2$. The linear maps $\upsilon$ and $\sigma$ of $\Ot$ and $\Wt$ respectively, defined by
$\upsilon(\xa):=x^{\oa}$ and $\sigma(D):=\upsilon D\upsilon\inv$ for $\xa\in\Ot$, and all $D\in\Wt$.

\begin{lmm}\label{l7}
For $n_1=n_2$, the maps $\upsilon$ and $\sigma$ are automorphisms of $\Ot$ and $\Wt$ respectively.
\end{lmm}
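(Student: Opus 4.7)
The plan is to treat $\upsilon$ and $\sigma$ separately, deducing the second from the first.

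First I would verify that $\upsilon$ is a well-defined linear bijection on $\Ot$. Since $\upsilon$ is given on the basis $\{x^{(a)} : 0\le a\le \taun\}$, linearity is automatic. The bijectivity is where the hypothesis $n_1=n_2$ enters: under this hypothesis, $\taun=(p^{n_1}-1,p^{n_2}-1)$ is symmetric, so the involution $a\mapsto \oa=(a_2,a_1)$ preserves the index set $0\le a\le \taun$, and $\upsilon^2=\Id$ gives the inverse. Multiplicativity reduces to the identity
$$\binom{a+b}{a}=\binom{a_1+b_1}{a_1}\binom{a_2+b_2}{a_2}=\binom{\oa+\ob}{\oa},$$
which, together with $\overline{a+b}=\oa+\ob$, yields $\upsilon(\xa\xb)=\upsilon(\xa)\upsilon(\xb)$. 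This establishes that $\upsilon\in\Aut\Ot$.

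For $\sigma$, the point is that conjugation by the algebra automorphism $\upsilon$ preserves $\Der\Ot$, and I need to show it also preserves the submodule $\Wt\subset\Der\Ot$. Since $\Wt=\Ot\ddo+\Ot\ddt$, it suffices to track $\ddo$ and $\ddt$. A direct computation on the basis gives
$$\sigma(\ddo)(\xa)=\upsilon\ddo(x^{(\oa)})=\upsilon(x^{(\oa-\veps_1)})=x^{(a-\veps_2)}=\ddt(\xa),$$
so $\sigma(\ddo)=\ddt$ and similarly $\sigma(\ddt)=\ddo$. Combined with the observation that $\sigma(f\ddi)=\upsilon(f)\,\sigma(\ddi)$ for $f\in\Ot$ (which follows from $\sigma(fD)=\sigma(f)\sigma(D)$ applied to derivations viewed as operators on $\Ot$), this gives $\sigma(\Wt)\subset\Wt$.

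The Lie algebra homomorphism property is then automatic: for $D,E\in\Wt$,
$$\sigma([D,E])=\upsilon(DE-ED)\upsilon\inv=\sigma(D)\sigma(E)-\sigma(E)\sigma(D)=[\sigma(D),\sigma(E)].$$
Finally, bijectivity of $\sigma$ on $\Wt$ follows from $\sigma^2=\upsilon^2 D\upsilon^{-2}=\Id$, again using $\upsilon^2=\Id$. There is no real obstacle in this proof; the only subtle point is remembering that the hypothesis $n_1=n_2$ is used exactly to make sure the index swap $a\mapsto\oa$ stays inside the truncation range $0\le a\le\taun$, without which $\upsilon$ would not even be defined as an endomorphism of $\Ot$.
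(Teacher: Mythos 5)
Your proof is correct, but it takes a more self-contained route than the paper. The paper disposes of the lemma by citation: it invokes \cite[Theorem 6.3.2]{slafpc} to conclude that $\upsilon$ is a continuous automorphism of $\Ot$, and \cite[Theorem 7.3.2]{slafpc} to conclude that conjugation $D\mapsto\psi\circ D\circ\psi\inv$ by a continuous automorphism $\psi$ of $\Ot$ gives an automorphism of $\Wt$, so $\sigma\in\Aut\Wt$. You instead verify everything by hand: multiplicativity of $\upsilon$ via the symmetry $\binom{\oa+\ob}{\oa}=\binom{a+b}{a}$ together with $\overline{a+b}=\oa+\ob$, and stability of $\Wt$ under conjugation via the explicit computation $\sigma(\ddo)=\ddt$, $\sigma(\ddt)=\ddo$ and the identity $\sigma(f\ddi)=\upsilon(f)\sigma(\ddi)$ (conjugating the composite of multiplication by $f$ with $\ddi$). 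This last step is the one genuinely needed point that the citation to Strade hides: since $\Wt$ is in general a proper subalgebra of $\Der\Ot$ when $\unn\neq\underline{1}$, conjugation by an arbitrary automorphism of $\Ot$ need not preserve it, and your direct computation of $\sigma(\ddi)$ replaces Strade's machinery of continuous (admissible) automorphisms. You also correctly locate the only use of the hypothesis $n_1=n_2$: it makes $a\mapsto\oa$ a permutation of the index set $0\leq a\leq\taun$, so that $\upsilon$ is well defined and involutive. The paper's proof buys brevity at the cost of relying on the cited structure theory; yours is longer but elementary and makes the role of each hypothesis visible.
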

\begin{proof}
It follows easily from \cite[Theorem 6.3.2]{slafpc} that $\upsilon$ is a continuous automorphisms of $\Ot$ (which are in $\Aut\Ot$ as the name implies). It follows from \cite[Theorem 7.3.2]{slafpc} that conjugating an element $D$ of $\Wt$ by a continuous automorphism $\psi$ of $\Ot$, $D\mapsto\psi\circ D\circ\psi\inv$ is an automorphism of $\Wt$ and hence $\sigma\in\Aut\Wt$.
\end{proof}

\begin{lmm}\cite{jmmm}\label{l8} The normalizer of $T_W$ in $\Aut\Wt$ is $T_W$ if $n_1\neq n_2$ and $T_W\langle\sigma\rangle_2$ if $n_1=n_2$.
\end{lmm}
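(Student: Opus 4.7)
My plan is to study any $\psi$ in the normalizer $N(T_W)$ of $T_W$ in $\Aut\Wt$ via its induced action on the $T_W$-weight decomposition of $\Wt$. Since $\psi\in T_W$ with parameters $(t_1,t_2)$ acts on $\xa\partial_k$ by the scalar $t_1^{a_1}t_2^{a_2}t_k^{-1}$, the vector $\xa\partial_k$ is a $T_W$-weight vector of weight $a-\veps_k\in\ZN^2$. Writing $\tau_i:=5^{n_i}-1$, a direct inspection of the basis shows that the weight space $V_c$ has dimension $2$ precisely when $c\in B:=\{0,\dots,\tau_1-1\}\times\{0,\dots,\tau_2-1\}$, and dimension $0$ or $1$ otherwise.

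Any $\psi\in N(T_W)$ induces an automorphism of $T_W$, hence a matrix $M\in\GL_2(\ZN)$ acting on the character lattice; since $\psi V_c=V_{Mc}$ preserving dimensions, $M$ must permute $B$. As $M$ is linear with $M(0)=0$ and $B$ is the integer-point set of a rectangle with extreme points $C_0=(0,0)$, $C_1=(\tau_1-1,0)$, $C_2=(0,\tau_2-1)$, $C_3=(\tau_1-1,\tau_2-1)$, $M$ permutes these four corners. Additivity $M(v+w)=Mv+Mw$ applied to $C_1+C_2=C_3$ rules out every permutation that moves $C_3$: each such case forces $2(\tau_i-1)=0$ as an integer, a contradiction. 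The surviving possibilities are $M=I$, and the swap $C_1\leftrightarrow C_2$ with $C_3$ fixed; the latter additionally forces $\tau_1=\tau_2$, i.e.\ $n_1=n_2$, and $M$ is then the swap matrix on $\ZN^2$.

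When $n_1=n_2$, I would check directly from the definitions of $\sigma$ and $T_W$ that $\sigma\in N(T_W)$ and that $\sigma\psi\sigma^{-1}$ has parameters $(t_2,t_1)$ when $\psi$ has parameters $(t_1,t_2)$. Consequently $\sigma T_W$ has order two in $N(T_W)/T_W$ and realizes the swap matrix in the image of the embedding $N(T_W)/T_W\hookrightarrow\GL_2(\ZN)$.

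The remaining step is to show that the centralizer of $T_W$ in $\Aut\Wt$ equals $T_W$ itself. Let $\psi$ centralize $T_W$; then $\psi$ preserves every $V_c$ and acts by a scalar $\mu_c\in F^\times$ on each one-dimensional $V_c$. Setting $s_i:=\mu_{-\veps_i}^{-1}$ and iterating the bracket identity $[\partial_j,\xa\partial_i]=x^{(a-\veps_j)}\partial_i$ to move between adjacent one-dimensional weight spaces on the boundary, the scalars $\mu_c$ propagate and must agree with those produced by the element of $T_W$ with parameters $(s_1,s_2)$. A parallel bracket analysis against these one-dimensional boundary vectors shows that on each two-dimensional $V_c$, $c\in B$, the basis vectors $x^{(c+\veps_1)}\partial_1$ and $x^{(c+\veps_2)}\partial_2$ are each scaled by the expected torus values and no off-diagonal component survives, so $\psi$ coincides with the same torus element on all of $\Wt$, giving $\psi\in T_W$.

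Combining, $N(T_W)/T_W$ is trivial if $n_1\neq n_2$ and cyclic of order two generated by $\sigma T_W$ if $n_1=n_2$, yielding the stated equalities. I expect the last step to be the main obstacle: the bookkeeping on the two-dimensional weight spaces is delicate, because one must exclude any off-diagonal component of $\psi$ in the basis $\{x^{(c+\veps_1)}\partial_1,\,x^{(c+\veps_2)}\partial_2\}$, and this requires brackets against judiciously chosen one-dimensional weight vectors on the boundary together with a coherent global choice of the pair $(s_1,s_2)\in(F^\times)^2$.
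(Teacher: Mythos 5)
You are not competing with an internal argument here: the paper does not prove this lemma at all, it simply imports it from \cite{jmmm}. So your proof is necessarily an independent route, and its skeleton is correct. Working entirely inside the $T_W$-weight decomposition of $\Wt$, you correctly identify that $x^{(a)}\partial_k$ has weight $a-\veps_k$, that the two-dimensional weight spaces are exactly those indexed by the lattice rectangle $B=\{0,\dots,\tau_1-1\}\times\{0,\dots,\tau_2-1\}$ with $\tau_i=5^{n_i}-1$, that conjugation gives a homomorphism $N_{\Aut\Wt}(T_W)\to\GL_2(\ZN)$ with kernel the centralizer, that the image must preserve $B$ hence permute the corners of its convex hull, and the corner bookkeeping (each permutation moving $C_3$ forces $2(\tau_i-1)=0$, impossible since $\tau_i\geq 4$) does pin the image down to $\{\Id\}$, or $\{\Id,\text{swap}\}$ with $\tau_1=\tau_2$; and $\sigma$ indeed realizes the swap when $n_1=n_2$, since $\sigma(x^{(a)}\partial_k)=x^{(\oa)}\partial_{\overline{k}}$ conjugates the torus element with parameters $(t_1,t_2)$ to the one with $(t_2,t_1)$. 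What this buys, compared with the cited source and the related literature on tori in Cartan type algebras, is that you never need the identification of $\Aut\Wt$ with continuous automorphisms of $\Ot$; everything is elementary weight combinatorics plus one centralizer computation.

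The step you flag as the main obstacle --- that the centralizer of $T_W$ is $T_W$, and in particular that no off-diagonal component survives on the two-dimensional spaces --- is not actually delicate, and here is the short way to close it in your own setup. With $\psi(\partial_i)=\alpha_i\partial_i$, the relation $[\partial_1,x^{(k,0)}\partial_2]=x^{(k-1,0)}\partial_2$ gives $\psi(x^{(k,0)}\partial_2)=\alpha_1^{-k}\alpha_2\,x^{(k,0)}\partial_2$, and symmetrically for $x^{(0,l)}\partial_1$; so $\phi:=\theta^{-1}\psi$, where $\theta\in T_W$ has parameters $(\alpha_1^{-1},\alpha_2^{-1})$, fixes $\partial_1,\partial_2$ and all boundary vectors $x^{(k,0)}\partial_2$, $x^{(0,l)}\partial_1$. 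Since $\phi$ fixes $\partial_1,\partial_2$, it commutes with $\ad\partial_1$ and $\ad\partial_2$. Now fix $c\in B$ and the basis $e_1=x^{(c+\veps_1)}\partial_1$, $e_2=x^{(c+\veps_2)}\partial_2$ of the weight space: $(\ad\partial_1)^{c_1+1}$ sends $e_1$ to $x^{(0,c_2)}\partial_1\neq0$ and annihilates $e_2$, while $(\ad\partial_2)^{c_2+1}$ annihilates $e_1$ and sends $e_2$ to $x^{(c_1,0)}\partial_2\neq0$. Writing $\phi(e_1)=Ae_1+Be_2$ and applying these two operators, whose images are already fixed by $\phi$, yields $A=1$ and $B=0$; the same argument handles $e_2$, and the remaining one-dimensional spaces with $c_1=\tau_1$ or $c_2=\tau_2$ follow by one application of $\ad\partial_1$ or $\ad\partial_2$ into a space where $\phi$ is already the identity. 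Hence $\phi=\Id$ and $C_{\Aut\Wt}(T_W)=T_W$; together with $\sigma^2=\Id$, $\sigma\notin T_W$, this gives exactly $N_{\Aut\Wt}(T_W)=T_W$ for $n_1\neq n_2$ and $T_W\langle\sigma\rangle_2$ for $n_1=n_2$.
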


The following follows from the first paragraph on p.3920 of \cite{kuz}.
\begin{prp}\label{p2}
For every automorphism $\psi$ of $\Wt$ there exists a $\psi_M$ of $\Mt$ which respects $\Wt$ and whose restriction to $\Wt$ is $\psi.\hfill\square$
\end{prp}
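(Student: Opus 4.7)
The plan is to reduce to the description of $\Aut\Wt$ in terms of $\Aut\Ot$, and then glue in the action on the remaining summands of the decomposition $\Mt=\Ot\oplus\Wt\oplus\tW$ using appropriate correction factors.

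First I would recall from \cite{slafpc} and \cite{kuz} that every automorphism $\psi$ of $\Wt$ respects the canonical filtration; concretely $\psi$ preserves the unique maximal subalgebra of finite codimension in $\Wt$, and by the standard Kuznetsov/Wilson argument $\psi$ is then implemented by conjugation by a continuous (filtration-preserving) automorphism $\upsilon$ of $\Ot$, i.e.\ $\psi(D)=\upsilon D\upsilon\inv$ for every $D\in\Wt$. This is exactly the mechanism already used in Lemma \ref{l7} to produce $\sigma$ from $\upsilon$, only applied to a general $\psi$.

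Second, having $\upsilon\in\Aut\Ot$ in hand, I would try the ansatz
\[
\psi_M(f):=c\,\upsilon(f),\qquad \psi_M(\tE):=d\,\widetilde{\psi(E)},\qquad f\in\Ot,\ E\in\Wt,
\]
with scalar correction factors $c,d\in\Ot$ built from the Jacobian density of $\upsilon$. The need for such densities is dictated by the divergence terms in the defining brackets $[D,\tE]=\widetilde{[D,E]}+2\div(D)\tE$ and $[D,f]=D(f)-2\div(D)f$: since $\div$ does not transform tensorially under $\upsilon$, the factors $c$ and $d$ must absorb the discrepancy. Forcing the brackets $[D,\tE]$ and $[D,f]$ to be preserved singles out $c$ as essentially the $(-2)$-power of the Jacobian density and $d$ as its compensating power, yielding the consistency relation $d=c\inv{}\inv$ (consistent with the fact that the canonical $\ZN$-grading on $\Mt$ descends to a grading modulo $3$ and that $[f,\tE]=fE$ is multiplicative).

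Third, I would verify the remaining relations one by one. The bracket $[f,\tE]=fE$ then falls out automatically from the choice of $c$ and $d$. The two subtler identities are $[f_1,f_2]=2(f_1\ddo(f_2)-f_2\ddo(f_1))\tdt+2(f_2\ddt(f_1)-f_1\ddt(f_2))\tdo$ and $[f_1\tdo+f_2\tdt,g_1\tdo+g_2\tdt]=f_1g_2-f_2g_1$, both of which involve $\ddo,\ddt$ explicitly and so depend sensitively on the precise shape of $c$ as a Jacobian density; this is the main obstacle, and it is exactly the calculation executed on p.~3920 of \cite{kuz}. Since that reference produces the required $\psi_M$, the proposition follows by invoking it.
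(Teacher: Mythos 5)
Your proposal is correct and takes essentially the same route as the paper, whose entire proof is the remark that the statement follows from the first paragraph of p.~3920 of \cite{kuz}; your sketch (realizing $\psi$ as conjugation by a continuous automorphism of $\Ot$ and extending to the summands $\Ot$ and $\tW$ by Jacobian-density correction factors, with the verification of the remaining brackets deferred to that same reference) is a faithful outline of what that citation contains. The only slip is notational: the consistency relation forced by $[f,\tE]=fE$ is $d=c^{-1}$, not $d=(c^{-1})^{-1}$ as written.
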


By Proposition \ref{p2} there exists a  $\sigma_M\in\Aut\Mt$ which respects $\Wt$ and whose restriction is $\sigma$. We fix this $\sigma_M$.

Now we can prove that $T_M$ is a maximal torus in $\Aut\Mt$. Let $U_M=\langle\sigma_M\rangle$ when $n_1=n_2$ and identity otherwise.

\begin{prp}\label{p3} The normalizer of $T_M$ in $\Aut M$ is $T_MU_M$. \end{prp}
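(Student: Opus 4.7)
The plan is to verify both inclusions in $N(T_M) = T_M U_M$, where $N(T_M)$ denotes the normalizer in $\Aut \Mt$.

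\emph{Forward inclusion $T_M U_M \subset N(T_M)$.} Since $T_M$ is abelian, $T_M \subset N(T_M)$. When $n_1 = n_2$ and $\Phi \in T_M$, the conjugate $\sigma_M \Phi \sigma_M^{-1}$ lies in $\Aut_W \Mt$, and its restriction $\sigma \pi(\Phi) \sigma^{-1}$ lies in $T_W$ by Lemma \ref{l8}. Corollary \ref{c3} then gives $\sigma_M \Phi \sigma_M^{-1} \in T_M$, so $\sigma_M$ normalizes $T_M$.

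\emph{Reverse inclusion $N(T_M) \subset T_M U_M$.} Let $\Psi \in N(T_M)$. Conjugation by $\Psi$ induces a group automorphism $\alpha$ of the character group $\mathfrak{X}(T_M)$ with $\Psi(M_\chi) = M_{\alpha(\chi)}$ for every weight $\chi$. The strategy is to first prove $\Psi(\Wt) = \Wt$; then $\pi(\Psi) \in \Aut \Wt$ normalizes $T_W = \pi(T_M)$, Lemma \ref{l8} yields $\pi(\Psi) = \psi \sigma^k$ for some $\psi \in T_W$ and $k \in \{0, 1\}$ (with $k = 0$ forced unless $n_1 = n_2$), so $\pi(\Psi \sigma_M^{-k}) = \psi \in T_W$ and Corollary \ref{c3} gives $\Psi \sigma_M^{-k} \in T_M$, i.e., $\Psi \in T_M \sigma_M^k \subset T_M U_M$.

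\emph{Preservation of $\Wt$ (main obstacle).} The canonical $\ZN$-grading of $\Mt$ is the coarsening of $\oGamma_M$ via the sum map $s: (a_1, a_2) \mapsto a_1 + a_2$, so each weight space $M_\chi$ sits in the homogeneous component $M_{s(\chi)}$. Since $\Aut \Mt$ respects the canonical filtration $M_{(d)} = \bigoplus_{j \geq d} M_j$, a standard comparison using $\Psi$ and $\Psi^{-1}$ (each preserves $M_{(d)}$, while $M_\chi \subset M_{(s(\chi))}$ and $M_\chi \cap M_{(s(\chi)+1)} = 0$) forces $s(\alpha(\chi)) = s(\chi)$ on the support of $\oGamma_M$. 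Because this support generates $\mathfrak{X}(T_M)$ by Remark \ref{r2}, the identity $s \circ \alpha = s$ extends to all of $\mathfrak{X}(T_M)$. Now $\mathfrak{X}(T_M) = \{(a_1, a_2) \in \ZN^2 : a_1 \equiv a_2 \pmod 3\}$, the quotient $\mathfrak{X}(T_M)/3\ZN^2$ is cyclic of order $3$, and $s \bmod 3$ is a bijection on it; preservation of $s$ thus forces $\alpha$ to preserve the sublattice $3\ZN^2$. Finally, the $T_M$-weights appearing on $\Wt$ are exactly those in $3\ZN^2$, whereas the weights on $\tW$ and $\Ot$ lie in the cosets $(1,1) + 3\ZN^2$ and $(-1,-1) + 3\ZN^2$ respectively; it follows that $\Psi(\Wt) = \Wt$, completing the argument.
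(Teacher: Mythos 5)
Your proof is correct, and while its overall skeleton matches the paper's (reduce to $\Wt$, use $\pi(T_M)=T_W$ from Lemma \ref{l5}, then Lemma \ref{l8} and Corollaries \ref{c2}--\ref{c3} to pass back up to $\Aut\Mt$), the crucial step $\Psi(\Wt)=\Wt$ is handled by a genuinely different argument. The paper imports Kuznetsov's factorization $\Psi=\Phi\Omega$ with $\Phi(y)\in y+M_{(i+1)}$ for $y\in M_i$ and $\Omega(M_i)=M_i$, and shows $\Phi=\Id_M$, so that $\Psi$ preserves every canonical component $M_i$ and hence $\Wt=\bigoplus_i M_{3i}$. You instead use only the weaker imported fact that $\Aut\Mt$ respects the canonical filtration, together with the observation that a normalizing $\Psi$ permutes the $T_M$-weight spaces via an automorphism $\alpha$ of $\mathfrak{X}(T_M)=\{(a_1,a_2)\in\ZN^2\,:\,a_1\equiv a_2 \pmod 3\}$; the two-sided filtration comparison gives $s\circ\alpha=s$ on the support, hence everywhere since the support generates, and the congruence argument mod $3\ZN^2$ isolates $\Wt$ as the span of the weight spaces with weights in $3\ZN^2$. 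This buys a more self-contained proof (no need for the $\Phi\Omega$ decomposition, only filtration-invariance, which the paper already quotes for other purposes) at the cost of making the weight-lattice bookkeeping explicit; note also that your final mod-$3$ step could be shortened, since $s\circ\alpha=s$ already yields $\Psi(M_i)=M_i$ for all $i$ and $\Wt=\bigoplus_{i\in\ZN}M_{3i}$, which is exactly the paper's concluding observation. Your handling of the case $\pi(\Psi)=\psi\sigma^k$ by multiplying by $\sigma_M^{-k}$ and invoking Corollary \ref{c3} is also a slight streamlining of the paper's use of Corollary \ref{c2} and the element $\Theta$, and your forward inclusion (conjugating $T_M$ by $\sigma_M$ and applying Corollary \ref{c3}) is essentially the paper's converse argument.
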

\begin{proof}
In \cite[p. 3921]{kuz} it is stated that we can decompose $\Psi\in\Aut M$ as the product of $\Psi=\Phi\Omega$ where $\Phi,$ $\Omega\in\Aut\Mt$ are such that for all $i\in\ZN$ we have
$$\Phi(y)\in y+M_{(i+1)},\quad\mbox{for }y\in M_i,$$
$$\Omega(M_i)=M_i.$$
Let $\Psi\in N_{\Aut\Mt}(T_M)$. Then we will show that $\Phi=\Id_M$.

Let $y\in M_i$ be a nonzero eigenvector of $T_M$. Since $\Psi\in N_{\Aut\Mt}(T_M)$ we have $\Psi(y)$ is an eigenvector. The eigenspaces of $T_M$ are the $M_{(a_1,a_2)}$, where $(a_1,a_2)\in\ZN^2$. Remark \ref{r1} gives us that  $\Psi(y)\in M_k$ for some $k\in\ZN$. Now we use the decomposition of $\Psi=\Phi\Omega$. We have $\Omega(y)=w\in M_i$
since $\Omega(M_l)=M_l$ for all $l$.  
\begin{equation}\label{eq2}\Psi(y)=\Phi\Omega(y)=\Phi(w)\in w+M_{(i+1)}.\end{equation}
Since $w\neq 0$ the calculations above show that $\Psi(y)\in M_{(i)}$ and $\Psi(y)\notin M_{(i+1)}$.

The intersection of $M_k$ and $M_{(i)}=\bigoplus_{j\geq i}M_j$ is zero if $k<i$. Since $\Psi(y)\in M_{(i)}$ by (\ref{eq2}) and $0\neq\Psi(y)\in M_k$ we have $k\geq i$. Since $\Psi(y)\notin M_{(i+1)}$ by (\ref{eq2}) and $\psi(y)\in M_k$ we have $k\leq i$. Hence $k=i$. We have shown that $\Psi(M_i)=M_i$ for all $i$. Hence $\Phi=\Id_M$. Since $\Wt=\bigoplus_{i\in\ZN}M_{3i}$ we have $\Psi(\Wt)=\Wt$. We conclude that if $\Psi\in N_{\Aut\Mt}(T_M)$ then $\Psi$ preserves the standard $\ZN$-grading of $\Mt$ and that $\pi(\Psi)\in N_{\Aut\Wt}(T_W)$ since $\pi(T_M)=T_W$ (Lemma \ref{l5}). 

According to Lemma \ref{l8}, $N_{\Aut\Wt}(T_W)=T_W$ when $n_1\neq n_2$ and $T_W\langle\sigma
\rangle$ if $n_1=n_2$. By Corollary \ref{c3}, the set of automorphisms of $\Mt$ which when restricted to $\Wt$ are in $T_W$ is $T_M$. For $n_1=n_2$, Corollary \ref{c2} says that if $\Psi\in\Aut_W \Mt$ and $\pi(\Psi)=\rho\sigma$, where $\rho\in T_W$ then there exists a $\Xi\in T_M$ such that $\pi(\Xi)=\rho$ and $\Psi=\Xi\sigma_M\Theta^l$ for $0\leq l\leq2$. The automorphism $\Theta$ is in $T_M$ since $\Theta=\lambda(\beta^2,\beta^2)$. Hence, $N_{\Aut\Mt}(T_M)\subset T_MU_M$. 

Conversely let $\Psi\in\Aut_W\Mt$ such that $\pi(\Psi)\in N_{\Aut\Wt}(T_W)$. Then 
$$\pi(\Psi\lambda(\ut)\Psi\inv)=\pi(\Psi)\pi(\lambda(\ut))\pi(\Psi)\inv\in T_W.$$ 
By Corollary \ref{c3}, we have that $\Psi\lambda(\ut)\Psi\inv\in T_M$ and hence it follows that $\Psi\in N_{\Aut\Mt}(T_M)$ . Since for $n_1=n_2$ we have $\pi(\sigma_m)=\sigma\in N_{\Aut\Wt}(T_W)$ and it follows that $\sigma_M\in N_{\Aut\Mt}(T_M)$. We have shown that $$T_MU_M\subset N_{\Aut\Mt}(T_M).$$
\end{proof}

\begin{crl}\label{c4} The centralizer of $T_M$ in $\Aut \Mt$ is $T_M$. Moreover, $T_M$ is a maximal torus and $T_M=\eta_{\Gamma_M}(\Zhat)$.\end{crl}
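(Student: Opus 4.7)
The plan is to deduce all three assertions quickly from Proposition~\ref{p3}, which gives $N_{\Aut\Mt}(T_M) = T_M U_M$. Since the centralizer is contained in the normalizer, I start by cutting this down to the centralizing part. When $n_1 \neq n_2$, $U_M$ is trivial, so $C_{\Aut\Mt}(T_M) \subset T_M$; as $T_M$ is abelian (a torus), equality holds. When $n_1 = n_2$, the task reduces to ruling out the coset $T_M \sigma_M$. For this I use that $\sigma$ is induced by the variable-swap $\upsilon(\xa) = x^{\oa}$, so conjugation by $\sigma$ of the element of $T_W$ parametrized by $(s_1,s_2)$ yields the element parametrized by $(s_2,s_1)$. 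Lifting via Lemma~\ref{l5} and Corollary~\ref{c2}, one obtains $\sigma_M \lambda(t_1,t_2) \sigma_M^{-1} = \lambda(t_2,t_1)\Theta^l$ for some $l \in \{0,1,2\}$, which differs from $\lambda(t_1,t_2)$ for generic $(t_1,t_2)$. A short check using commutativity of $T_M$ shows that if $\mu\sigma_M$ centralizes $T_M$ for some $\mu \in T_M$, then $\sigma_M$ itself must centralize $T_M$, a contradiction. Hence $C_{\Aut\Mt}(T_M) = T_M$ in both cases.

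Maximality of $T_M$ as a torus is then immediate: any torus $T' \supset T_M$ in $\Aut\Mt$ is abelian, so every element of $T'$ commutes with $T_M$, giving $T' \subset C_{\Aut\Mt}(T_M) = T_M$ and thus $T' = T_M$.

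For the final identification $T_M = \eta_{\Gamma_M}(\Zhat)$, Lemma~\ref{l4} gives $T_M = \eta_{\oGamma_M}(\Zhat)$, and applying Lemma~\ref{lz} to the coarsening $\oGamma_M$ of $\Gamma_M$ (noted in Remark~\ref{r2}) yields $\eta_{\oGamma_M}(\Zhat) \subset \eta_{\Gamma_M}(\Zhat)$. Since $\eta_{\Gamma_M}(\Zhat)$ is a quasi-torus and therefore abelian, each of its elements commutes with $T_M$, so $\eta_{\Gamma_M}(\Zhat) \subset C_{\Aut\Mt}(T_M) = T_M$, and equality follows.

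The main obstacle is the case $n_1 = n_2$ in the centralizer computation: one must confirm that no element of the nontrivial coset $T_M\sigma_M$ commutes with $T_M$. Everything else is formal once the centralizer is pinned down. The key computation boils down to verifying that $\sigma$ interchanges the two parameters of any element of $T_W$, which is a direct consequence of the defining formula for $\upsilon$.
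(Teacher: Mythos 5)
Your proof is correct and follows essentially the same route as the paper: centralizer inside the normalizer $T_MU_M$ from Proposition~\ref{p3}, elimination of the coset $T_M\sigma_M$ by passing to the restriction on $\Wt$, maximality of $T_M$ from commutativity, and the identification $T_M=\eta_{\Gamma_M}(\Zhat)$ from Lemma~\ref{lz} together with the centralizer computation. The only difference is local: where the paper rules out $\sigma$ by appealing to $T_W$ being a maximal torus with $\sigma\notin T_W$, you verify directly that conjugation by $\sigma$ interchanges the two parameters of an element of $T_W$ and lift this via Lemma~\ref{l5} and Corollary~\ref{c2}, which is a more explicit, self-contained justification of that single step.
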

\begin{proof}
The centralizer of $T_M$ is contained in the normalizer of $T_M$. By Proposition \ref{p3}, the normalizer of $T_M$ is in $\Aut_W\Mt$. This implies that if $\Psi\in\Aut_W\Mt$ and $\Psi$ is in the centralizer of $T_M$ then $\pi(\Psi)$ must be in the centralizer of $\pi(T_M)=T_W$  (Lemma \ref{l5}). Since $T_W$ is a maximal torus and (for $n_1=n_2$) $\sigma\notin T_W$ we have that $\sigma$ is not in the centralizer of $T_W$. Hence $\sigma_M$ is not in the centralizer of $T_M$. We have shown that the centralizer of $T_M$ in $\Aut\Mt$ is $T_M$ and that $T_M$ is a maximal torus. 

By Lemma \ref{lz} we have $T_M=\eta_{\oGamma_M}(\Zhat)\subset\eta_{\Gamma_M}(\Zhat)$. Since $\eta_{\Gamma_M}(\Zhat)$ is abelian and contains $T_M$ it must be in the centralizer of $T_M$ which is $T_M$.\end{proof}

\begin{prp}\label{p4}
Let $Q$ be a quasi-torus in $\Aut\Mt$. There is an automorphism $\Psi\in\Aut\Mt$ such that $\Psi Q\Psi\inv\subset T_M$.
\end{prp}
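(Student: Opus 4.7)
The plan is to combine Proposition \ref{p1} with the conjugacy of maximal tori in $\Aut\Mt$ to push $Q$ into $N_{\Aut\Mt}(T_M)=T_MU_M$ (Proposition \ref{p3}), and then, in the delicate case $n_1=n_2$, carry out a second conjugation that removes any $\sigma_M$-component.

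By Proposition \ref{p1}, the quasi-torus $Q$ lies in the normalizer of some maximal torus $T'$ of $\Aut\Mt$. By Corollary \ref{c4}, $T_M$ itself is a maximal torus, and since maximal tori in an algebraic group are conjugate, some $\Phi_1\in\Aut\Mt$ satisfies $\Phi_1 T'\Phi_1\inv=T_M$; hence $\Phi_1 Q\Phi_1\inv\subset N_{\Aut\Mt}(T_M)=T_MU_M$. If $n_1\neq n_2$, then $U_M=\{\Id\}$ and we are done. So assume $n_1=n_2$, set $Q':=\Phi_1 Q\Phi_1\inv$ and $Q'_0:=Q'\cap T_M$; we may assume $Q'\neq Q'_0$, so $[Q':Q'_0]=2$, and pick $\Psi_0=t_0\sigma_M\in Q'\setminus Q'_0$. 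Abelian-ness of $Q'$ forces $\sigma_M t\sigma_M\inv=t$ for every $t\in Q'_0$, so $Q'_0$ is contained in the one-dimensional $\sigma_M$-fixed subtorus $T_M^{\sigma_M}$ of $T_M$ (the swap action $\lambda(t_1,t_2)\mapsto\lambda(t_2,t_1)$ computed from $\sigma\psi_{s_1,s_2}\sigma\inv=\psi_{s_2,s_1}$ in the proof of Lemma \ref{l5}).

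Next I would pass to the centralizer $C:=C_{\Aut\Mt}(Q'_0)$, which is an algebraic subgroup of $\Aut\Mt$ containing both $T_M$ (as $T_M$ is abelian with $Q'_0\subset T_M$) and $\Psi_0$ (by the abelian-ness of $Q'$). Since $T_M$ is maximal in the ambient $\Aut\Mt$ it is a fortiori a maximal torus of $C$. The element $\Psi_0\in C$ is semisimple, so it lies in some maximal torus of $C$; conjugacy of maximal tori (in the identity component of $C$) then yields $\Phi_2\in C$ with $\Phi_2\Psi_0\Phi_2\inv\in T_M$. Because $\Phi_2$ centralizes $Q'_0$,
$$\Phi_2 Q'\Phi_2\inv=Q'_0\cup Q'_0\cdot(\Phi_2\Psi_0\Phi_2\inv)\subset T_M,$$
and the composite $\Psi:=\Phi_2\Phi_1$ is the required automorphism.

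The hard step will be producing $\Phi_2$, i.e.\ establishing that $\Psi_0$ is $C(Q'_0)$-conjugate into $T_M$. Beyond a general appeal to algebraic-group theory, a concrete alternative is available: the linear change of coordinates $(x_1,x_2)\mapsto(x_1+x_2,x_1-x_2)$ is defined in characteristic five (since $2$ is invertible) and induces an automorphism of $\Ot$ which conjugates $\upsilon$ to the element of $T_W$ sending $x^{(a)}\mapsto(-1)^{a_2}x^{(a)}$, while commuting with every $\lambda(t,t)\in T_M^{\sigma_M}$ (because such $\lambda(t,t)$ acts as a scalar on each total-degree component). By Proposition \ref{p2} this lifts to an element of $\Aut\Mt$; a further adjustment by an automorphism fixing $T_M^{\sigma_M}$ pointwise moves the resulting maximal torus onto $T_M$, yielding the desired $\Phi_2$.
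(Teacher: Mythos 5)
Your opening move (Proposition \ref{p1} plus Proposition \ref{p3} to land $Q$ in $N_{\Aut\Mt}(T_M)=T_MU_M$, and the immediate conclusion when $n_1\neq n_2$) agrees with the paper. From there, however, the paper does something different and essentially unavoidable: it applies the restriction $\pi$ to get $\pi(Q)\subset N_{\Aut\Wt}(T_W)$, quotes the result of \cite{jmmm} that such a quasi-torus of $\Aut\Wt$ is conjugate into $T_W$, lifts the conjugating automorphism to $\Mt$ by Proposition \ref{p2}, and pulls the conclusion back to $T_M$ via Corollary \ref{c3}. Your substitute for that citation contains a genuine gap: the assertion that the semisimple element $\Psi_0\in C=C_{\Aut\Mt}(Q'_0)$ ``lies in some maximal torus of $C$'' is false for disconnected algebraic groups, and $C$ is exactly the kind of group (it contains $T_M$ together with an element $\Psi_0\notin T_M$ normalizing it) that tends to be disconnected; all maximal tori of $C$ lie in $C^\circ$. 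That no general algebraic-group principle can close this gap is shown by the Klein four-subgroup of $\mathrm{PGL}_2(F)$ generated by the classes of $\mathrm{diag}(1,-1)$ and the antidiagonal involution: it is a quasi-torus contained in the normalizer of the diagonal torus, consists of semisimple elements, and yet is not conjugate into any torus. So whether $Q'$ can be conjugated into $T_M$ genuinely depends on specific structure of $\Aut\Mt$ (equivalently, of $\Aut\Wt$, which is where the paper settles it).

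Your ``concrete alternative'' is the right kind of specific input: for $n_1=n_2$ the substitution $(x_1,x_2)\mapsto(x_1+x_2,x_1-x_2)$ is admissible and does conjugate the swap $\sigma$ to a diagonal element of $T_W$, which is in the spirit of how the Witt-algebra case is treated in \cite{jmmm}. But as written it only deals with $\sigma_M$ itself, not with the actual element $\Psi_0=t_0\sigma_M$ with $t_0\in T_M$ arbitrary: you would first need a twisted-conjugation (Hilbert~90 type) argument on $T_M$ to reduce $t_0$ modulo elements of the form $s\,{}^{\sigma_M}\!s^{-1}$, and every conjugation used must be checked to centralize $Q'_0$, which you argue only at the level of $\Wt$ and only for elements $\lambda(t,t)$ (note that lifts obtained from Proposition \ref{p2} commute a priori only up to a power of $\Theta$ by Corollary \ref{c2}, and that the fixed subgroup $T_M^{\sigma_M}$ need not be connected nor equal to $\{\lambda(t,t)\}$). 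The closing sentence ``a further adjustment by an automorphism fixing $T_M^{\sigma_M}$ pointwise moves the resulting maximal torus onto $T_M$'' is precisely the unproved crux. So the proposal is not a complete proof; filling it in would amount to redoing, inside $\Aut\Mt$, the conjugacy result for $\Aut\Wt$ that the paper simply cites.
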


\begin{proof}
By Proposition \ref{p1}, $Q$ is inside the normalizer of a maximal torus. Up to conjugation we can assume $Q\subset N_{\Aut\Mt}(T_M)$. Then $Q$ must preserve $\Wt$ since $N_{\Aut\Mt}(T_M)=T_MU_M$ (Proposition \ref{p3}). Let $Q\prim=\pi(Q)$. It follows that $Q\prim\subset N_{\Aut\Wt}(T_W)$. In \cite{jmmm} it is shown that there exists a $\psi\in\Aut\Wt$ such that $\psi Q\prim\psi\inv\subset T_W$. Proposition \ref{p2} says that there is $\Psi\in\Aut\Mt$ such that $\pi(\Psi)=\psi$. Hence $\pi(\Psi Q\Psi\inv)=\psi Q\prim\psi\inv\subset T_W$. Since $\pi(\Psi Q\Psi\inv)\subset T_W$, Corollary \ref{c3} give us that $\Psi Q\Psi\inv\subset T_M$. 
\end{proof}
We can now prove Theorem \ref{t1}. 
\begin{proof}
Let $L=\Mt$. Suppose $\Gamma:\Lgrad$ is a $G$-grading where $G$ is a group without elements of order five. Without loss of generality, we assume that the support of the grading generates $G$. Let $\eta_\Gamma:\widehat G\to\Aut L$ be the corresponding embedding and $Q=\eta_\Gamma(\widehat G)$. Then by Proposition \ref{p4}, there is a $\Psi\in\Aut\Mt$ such that $\Psi Q\Psi\inv\subset T_M$. Recall that by Corollary \ref{c4} that $T_M=\eta_{\Gamma_M}(\Zhat)$.
It follows from Lemma \ref{l1} that $L\grad L\prim_g$, where $L\prim_g=\Psi(L_g)$, is a coarsening of the standard $\ZN^2$-grading $\Gamma_M$ which is isomorphic to the original grading.
\end{proof}

\end{document}